\newtheorem{thm}{Theorem}
\newtheorem{cor}[thm]{Corollary}
\newtheorem{lem}[thm]{Lemma}
\theoremstyle{definition}
\begin{document}


\baselineskip=17pt



\title{On the number of solutions of decomposable form inequalities}

\author[C.L.~Stewart]{C.L.~Stewart}
\address{Department of Pure Mathematics, University of Waterloo \\
Waterloo, Ontario, Canada N2L 3G1}
\email{cstewart@uwaterloo.ca}

\dedicatory{}

\date{}

\begin{abstract}
 In 2001 Thunder gave an estimate for the number of integer solutions of decomposable form inequalities under the assumption that the forms are of finite type. The purpose of this article is to generalize this result to forms which are of essentially finite type. In the special case of binary forms this gives an improvement of a result of Mahler from 1933.
\end{abstract}

\subjclass[2020]{Primary 11D75, 11J87; Secondary 11D57}

\keywords{decomposable forms, Subspace Theorem}

\maketitle

\section{Introduction}
Let $n$ be an integer with $n \geq 2$ and put $\textbf X$$ = (X_1,...,X_n)$. Let $F$ be a non-zero decomposable form in $n$ variables with integer coefficients and degree $d$ with 
$ d> n$, so

\begin{equation} \label{eq1}
F(\textbf{X}) = L_1(\textbf {X})....L_d(\textbf {X})
\end{equation}
where $L_1(\textbf{X}), ... ,L_d(\textbf{X})$ are linear forms in $\mathbb C[X_1,...,X_n]$.  Let $m$ be a positive integer and let $N_F(m)$ denote the number of points $(a_1,...,a_n)$ with integer coordinates for which
\begin{equation} \label{eq2}
|F(a_1,...,a_n)| \leq m.
\end{equation}
Let $V_F$ denote the volume of the set
$$
\{(x_1,...,x_n) \in \mathbb R^n : |F(x_1, ... ,x_n)| \leq 1 \}.
$$
By homogeneity the volume of
\begin{equation} \label{eq3}
\{(x_1,...,x_n) \in \mathbb R^n : |F(x_1, ... ,x_n)| \leq m \}
\end{equation}
is $V_Fm^{n/d}$ and one might suppose that $N_F(m)$ is close to $V_Fm^{n/d}$.

$F$ is said to be of \emph{finite type} if $V_F$ is finite and the same is true for $F$ restricted to any non-trivial rational subspace. In particular, for every $n'$-dimensional subspace $S$ of $\mathbb{R}^n$ defined over $\mathbb{Q}$ the $n'$-dimensional volume of $F$ restricted to $S$ is finite. In 2001 Thunder \cite{Th1} showed that if $F$ is of finite type then
\begin{equation} \label{eq4}
N_F(m) \ll_{n,d} m^{n/d} ;
\end{equation}
throughout this paper the symbol $\ll $ together with a subscript will mean less than a positive number which depends on the terms in the subscript. Thunder's result resolved a conjecture of Schmidt \cite{Sch3} and is best possible up to the dependence of the implicit constant on $n$ and $d$.

For any element $\textbf x=(x_1,..., x_n)$ in $\mathbb{C}^n$ let $\left\Vert \textbf x \right\Vert = (x_1\overline x_1 + ... +x_n\overline x_n)^{1/2} $. For any linear form $L(\textbf{X})= \alpha_1X_1+ ... + \alpha_nX_n$ in $\mathbb{C}[X_1, ... ,X_n]$ let $\textbf{L}$ denote the coefficient vector $(\alpha_1, ... ,\alpha_n)$ of $L(\textbf{X})$. We define the quantity $\mathcal{H}(F)$ of $F$ by
$$
\mathcal{H}(F) = \prod^d_{\substack{i=1 }} \left\Vert \textbf L_i \right\Vert .
$$

Thunder \cite{Th1}, \cite{Th3} also proved that if $F$ is of finite type and $F$ is not proportional to a power of a definite quadratic form in $2$ variables then there exist positive numbers $a_F$ and $c_F$ such that
\begin{equation} \label{eq5}
|N_F(m) - m^{n/d}V_F|  \ll_{n,d}  \mathcal{H}(F)^{c_F}(1+ \log m)^{n-2}m^{\frac{n-1}{d-a_F}}.
\end{equation}
If the discriminant of the form is non-zero then one may take $a_F=1$ and $c_F= \binom {d-1}{n-1} -1.$

If $T$ is in $\operatorname{GL}_n(\mathbb{Z})$ then the form $G(\textbf{X})= F(T(\textbf{X}))$ is said to be equivalent to $F$. Then $V_F=V_G$ but $\mathcal{H}(F)$ need not be equal to $\mathcal{H}(G)$. Put
$$
\mathcal{H}_0(F) =  \min_T \mathcal{H}(F\circ T)
$$
where the minimum is taken over $T$ in $\operatorname{GL}_n(\mathbb{Z})$. Thunder \cite{Th2} showed that if $F$ is of finite type and $F$ is not proportional to a power of a definite quadratic form in $2$ variables then
$$
V_F \ll_{n,d} \mathcal{H}_0(F)^{-1/d}(1 + \log \mathcal{H}_0(F) )^{n-1}. 
$$

In 1933 Mahler \cite{M} proved that if $n=2$ and $F(X_1, X_2)$ is a binary form with integer coefficients which is irreducible over the rationals then
\begin{equation} \label{eq6}
|N_F(m) - m^{2/d}V_F|  \ll_{F}  m^{1/(d-1)}.
\end{equation}
Thunder's result \eqref{eq5} is a generalization of \eqref{eq6} since if $F$ is irreducible over $\mathbb{Q}$ then $a_F=1$ and $F$ is of finite type.

Ramachandra, in 1969 \cite{R}, was the first to obtain an asymptotic result for $N_F(m)$ for a class of decomposable forms with $n \geq 3$. He did so when $F$ has the shape
$$
F(\textbf{X}) = N_{\mathbb{K}/\mathbb{Q}}(X_1 + \alpha X_2 + \alpha^2X_3 + ... + \alpha^{n-1}X_n)
$$
where $\mathbb{K} = \mathbb{Q}(\alpha)$ is a number field of degree $r$ with $r \geq 8n^6$ and $N_{\mathbb{K}/\mathbb{Q}}$ denotes the norm from $\mathbb{K}$ to $\mathbb{Q}.$

Let $\alpha_1, ... ,\alpha_n$ be non-zero algebraic numbers and put $\mathbb{K} = \mathbb{Q}(\alpha_1, ... ,\alpha_n)$. Suppose that $F(\textbf{X})$ is a norm form so
$$
F(\textbf{X}) = N_{\mathbb{K}/\mathbb{Q}}(\alpha_1X_1 + ... +\alpha_nX_n) = \prod_{\sigma}\sigma(\alpha_1X_1 + ... +\alpha_nX_n)
$$
where the product is taken over the isomorphic embeddings $\sigma$ of $\mathbb{K}$ into $\mathbb{C}$. Let $V$ be the vector space of all rational linear combinations of $\alpha_1, ... ,\alpha_n$. For each subfield $\mathbb{J}$ of $\mathbb{K}$ we define the linear subspace $V^{\mathbb{J}}$ of $V$ given by the elements of $V$ which remain in $V$ after multiplication by any element of $\mathbb{J}$. $F$ is said to be non-degenerate if $\alpha_1, ... ,\alpha_n$ are linearly independent over $\mathbb{Q}$ and if $V^{\mathbb{J}} = \{0\}$ for each subfield $\mathbb{J}$ of $\mathbb{K}$ which is not $\mathbb{Q}$ or an imaginary quadratic field. In 1972 Schmidt \cite{Sch1} proved that $N_F(m)$ is finite for each positive integer $m$ if and only if $F$ is non-degenerate; see \cite{JH1} and Chapter 9 of \cite{EG} for quantitative results for decomposable form equations. In 2000 Evertse \cite{JH3} proved that if $F$ is a non-degenerate norm form then
\begin{equation} \label{eq7}
N_F(m) \leq (16d)^{(n+1)^3/3}(1+ \log m)^{n(n-1)/2}m^{(n+ \sum^{n-1}_{m=2}1/m)/d)}.
\end{equation}
Non-degenerate norm forms are of finite type and so \eqref{eq4} gives a better dependence on $m$ than \eqref{eq7} although the dependence of the upper bound on $n$ and $d$ is not explicit in \eqref{eq4}.

Let $N^*_F(m)$ denote the number of vectors $(a_1, ... ,a_n)$ with integer coordinates for which
\begin{equation} \label{eq8}
0 < |F(a_1, ... ,a_n)| \leq m.
\end{equation}
If $F$ is of finite type then $F$ does not vanish at any non-zero integer point and so
\begin{equation} \label{eq9}
N_F(m) = 1 + N^*_F(m).
\end{equation}

There exist distinct irreducible polynomials $F_1, ... ,F_k$ with integer coefficients, content $1$ and degrees $d_1, ... ,d_k$ respectively and there exist positive integers $l_1, ... ,l_k$ for which $d_1l_1+ ... +d_kl_k = d$ such that
\begin{equation} \label{eq10}
F(\textbf{X})=C_0F_1(\textbf{X})^{l_1} ... F_k(\textbf{X})^{l_k},
\end{equation}
where $|C_0|$ is the content of $F$. By, for instance, the discussion in Section 2, for each integer $j$ with $1\leq j \leq k$ the polynomial $F_j(\textbf{X})$ is of the form $aN_{\mathbb{K}/\mathbb{Q}}(L(\textbf{X}))$ where $a$ is a non-zero rational number, $\mathbb{K}$ is a number field of degree $d_j$ over $\mathbb{Q}$,  $N_{\mathbb{K}/\mathbb{Q}}$ denotes the norm from $\mathbb{K}$ to $\mathbb{Q}$ and $L(\textbf{X})$ is a linear form which is proportional to a linear form $L_i$ with $i$ from $\{1, ... ,d\}$.

 For $i= 1, ... ,d$ let $B_i$ be the rational subspace of $\mathbb{R}^n$ for which $L_i(\textbf{X})=0$. Note that if $L_i(\textbf{X})$ and $L_j(\textbf{X})$ divide $F_h(\textbf{X})$ in $\mathbb{C}[\textbf{X}]$ for some $h$ with $1\leq h\leq k$ then $B_i=B_j$. Thus each polynomial $F_i(\textbf{X})$ determines exactly one rational subspace of $\mathbb{R}^n$, say $A_i$, for which $F_i(\textbf{X})=0$. Put
\begin{equation} \label{eq11}
d_F = \begin{cases}
0 & \text{if}\ A_i= \{\textbf{0}\}\ \text{for}\ \ i=1, ... ,k \\
\max \{l_{i_1}d_{i_1} + ... +l_{i_j}d_{i_j}\} & \text{otherwise},
\end{cases}
\end{equation}
where the maximum is taken over those tuples $(i_1, ... ,i_j)$ of distinct integers for which $A_{i_1}\cap ...\cap A_{i_j}$ is different from the zero vector or equivalently for which there is a non-zero integer point $(s_1, ... ,s_n)$ for which $F_{i_m}(s_1, ... ,s_n)=0$ for $m=1, ... ,j$.

$F$ is said to be of \emph{essentially finite type} if $V_F$ is finite, $V(\tilde F) $ is finite whenever $\tilde F$ is $F$ restricted to a rational subspace of $\mathbb{R}^n$ which is not a subspace of $A_i$ for $i=1, ... ,k$ and
\begin{equation} \label{eq12}
A_1\cap ... \cap A_k = \{\textbf{0} \}.
\end{equation}

If $F$ is of essentially finite type then, by virtue of \eqref{eq12} ,
\begin{equation} \label{eq13}
d_F < d.
\end{equation}
Further, if $F$ is of finite type then it is also of essentially finite type since in this case $A_i=\{ \textbf{0}\}$ for $i=1, ... ,k$ and so \eqref{eq12} holds.

\begin{thm} \label{Theorem 1}
Let $F(\textbf{X})$ be a non-zero decomposable form in $n$ variables with integer coefficients and degree $d$ with $d>n\geq2$ and let $m$ be a positive integer. If $F$ is of essentially finite type then
\begin{equation} \label{eq14}
N^*_F(m) \ll_{n,d} m^{\frac{1}{d} + \frac{n-1}{d-d_F}}.
\end{equation}
 
\end{thm}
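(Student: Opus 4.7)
My plan is to reduce the essentially-finite-type case to Thunder's finite-type bound \eqref{eq4} via a geometric stratification of the solution set, handling ``small'' and ``large'' integer points separately. For $\textbf{a}$ with $\|\textbf{a}\| \leq cm^{1/d}$ the number of integer points is trivially $\ll_n m^{n/d}$, which lies within the target since $n/d = 1/d + (n-1)/d \leq 1/d + (n-1)/(d-d_F)$; this already covers the finite-type regime $d_F = 0$.

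For the ``large'' solutions with $\|\textbf{a}\| > cm^{1/d}$, the inequality $|F(\textbf{a})| \leq m$ together with $|L_i(\textbf{a})| \leq \|\textbf{L}_i\|\|\textbf{a}\|$ forces at least one linear factor $L_j(\textbf{a})$ to be abnormally small. I would stratify these points by the maximal tuple $I = I(\textbf{a}) \subseteq \{1,\ldots,k\}$ for which $|F_i(\textbf{a})|$ is small (in a sense to be made precise) for every $i \in I$, and by the associated rational subspace $W = W_I = \bigcap_{i \in I} A_i$. By \eqref{eq12} the subspace $W$ is proper; the trivial case $W = \{0\}$ falls into the small-solution count above, so in the nontrivial strata $W$ has dimension $n' \geq 1$. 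Factor $F = G \cdot H$ with $G = \prod_{i \in I} F_i^{l_i}$ (of degree $d_F^I \leq d_F$, vanishing on $W$) and $H = \prod_{i \notin I} F_i^{l_i}$. Since each linear factor of $G$ vanishes on $W$, one has $L_j(\textbf{a}) = L_j(\textbf{a}_{W^\perp})$ for each such $j$, hence $G(\textbf{a}) = G(\textbf{a}_{W^\perp})$ depends only on the perpendicular component. Choosing a basis of $\mathbb{Z}^n$ adapted to the sublattice $\mathbb{Z}^n \cap W$, I would fix the ``transverse'' integer coordinates (thereby fixing $G(\textbf{a})$) and count the remaining ``parallel'' integer coordinates by applying Thunder's \eqref{eq4} to the restriction $H|_W$, which the essentially-finite-type hypothesis renders a finite-type form of degree $d - d_F^I$ on $W$. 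This yields $\ll (m / |G(\textbf{a}_{W^\perp})|)^{n'/(d-d_F^I)}$ parallel choices per transverse slice.

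The chief obstacle is the outer summation over transverse integer coordinates: one must control $\sum_{\textbf{a}_{W^\perp}} |G(\textbf{a}_{W^\perp})|^{-n'/(d-d_F^I)}$, using that $G$ restricted to the complementary direction is itself a decomposable form of degree $d_F^I$ in $n-n'$ variables. After combining the estimates across the finitely many tuples $I$ and exploiting the maximality $d_F^I \leq d_F$, the dominant stratum should deliver the exponent $1/d + (n-1)/(d-d_F)$, with the $m^{1/d}$ factor arising from the residual direction of integer freedom transverse to the effective $(n-1)$-dimensional Thunder count at effective degree $d-d_F$. Additional care is needed to verify that $H|_W$ is genuinely of finite type (a consequence of the essentially-finite-type definition when $I$ is chosen maximal) and to avoid double-counting among overlapping strata, most naturally via an induction on $n$ or on $d-d_F$.
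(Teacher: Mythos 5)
Your stratification has two gaps that I don't think can be repaired along the lines you sketch. First, the claim that $H|_W$ is of finite type ``as a consequence of the essentially-finite-type definition'' is not available: the definition only controls $V(\tilde F)$ for $F$ restricted to rational subspaces that are \emph{not} contained in any $A_i$, whereas your $W=\bigcap_{i\in I}A_i$ is contained in $A_i$ for every $i\in I$, so the hypothesis says nothing at all about restrictions to $W$. In particular some $A_j$ with $j\notin I$ may meet $W$ nontrivially, so $H|_W$ may vanish on a line and \eqref{eq4} is simply inapplicable. Worse, the points you are counting in a given stratum do not lie in $W$; they lie in affine translates $\textbf{a}_{W^\perp}+W$, on which $H$ is an inhomogeneous polynomial in the parallel coordinates, not a decomposable form, so Thunder's theorem cannot be invoked slice by slice. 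Second, the outer sum $\sum_{\textbf{a}_{W^\perp}}|G(\textbf{a}_{W^\perp})|^{-n'/(d-d_F^I)}$, which you flag as the chief obstacle, is itself a small-values problem for a decomposable form of the same depth as the theorem being proved (and $G$ can vanish at nonzero transverse lattice points, making individual terms infinite). A smaller but real error: the stratum with $W=\{\textbf{0}\}$ does not reduce to the trivial count of points with $\left\Vert \textbf a \right\Vert\leq cm^{1/d}$ — for a finite-type form all $A_i=\{\textbf{0}\}$, yet bounding its solutions is exactly the content of \eqref{eq4} and requires the Subspace Theorem; points with no small factor $F_i$ can still have very large norm.

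For comparison, the paper's proof does not stratify by which $F_i$ is small. Its engine is a geometry-of-numbers lemma (Lemma \ref{lem2}, a modification of Thunder's Lemma 6): for every $\textbf{x}$ it produces $n$ linearly independent factors of $F$ whose normalized product is $\ll |F(\textbf{x})|^{1/d}|G(\textbf{x})|^{(n-1)/d_0}\mathcal{H}(F)^{-1/d}$, where $G$ is a divisor of $F$ of degree $d_0\geq d-d_F$ chosen so that $G$ does not vanish at a shortest lattice vector; the divisor $|G(\textbf{a})|\leq m$ then yields the exponent $\frac 1d+\frac{n-1}{d-d_F}$. The points are then counted by combining this with a quantitative Subspace Theorem (Lemmas \ref{lem3} and \ref{lem4}) to trap the very large solutions in $O_{n,d}(1)$ proper rational subspaces, counting lemmas for small products of linear forms (Lemmas \ref{lem5} and \ref{lem6}), a separate volume argument when $\mathcal{H}(F)$ is small, and finally an induction on $n$ over the proper rational subspaces, which are legitimate because any subspace containing a point with $F\neq 0$ is not contained in any $A_i$ and hence inherits the essentially-finite-type property. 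You would need some substitute for Lemma \ref{lem2} and for the Subspace Theorem input before your outline could be completed.
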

Notice that if $F$ is of finite type then $d_F=0$ and \eqref{eq4} follows from \eqref{eq9} and \eqref{eq14}.

The proof of Theorem \ref{Theorem 1} depends on a quantitative version of Schmidt's Subspace Theorem due to Evertse \cite{JH2}. A key feature of Theorem 1 is that the upper bound for $N^*_F(m)$ is independent of the coefficients of the form $F$. We require such an estimate in order to prove the analogue of estimate \eqref{eq5} for forms of essentially finite type. Before stating such a result we shall make explicit the quantities $a_F$ and $c_F$.

Any linear form $L_i(\textbf{X})$ in the decomposition of $F(\textbf{X})$ as in \eqref{eq1} is a factor of $F_j(\textbf{X})$ for some $j$ with $1\leq j \leq k$ by \eqref{eq10} and so is proportional to a linear form with coefficients in a number field of degree the degree of $F_j(\textbf{X})$. For a factorization as in \eqref{eq1} of $F$ we let $I(F)$ denote the set of all  $n$-tuples $(\textbf{L}_{i_1}, ... ,\textbf{L}_{i_n})$ of linearly independent coefficient vectors. For each linear form $L_i(\textbf{X})$ from \eqref{eq1} we denote by $b(L_i)$ the number of $n$-tuples in $I(F)$ which contain $\textbf{L}_i$ and we put
$$
b_F= \max \{b(L_1), ... ,b(L_n)\}. 
$$
Next let $J(F)$ be the subset of $I(F)$ consisting of $n$-tuples $(\textbf{L}_{i_1}, ... ,\textbf{L}_{i_n})$ for which for $j=1, ... ,n-1$ either $\textbf{L}_{i_{j+1}}$ is proportional to $\overline{\textbf{L}}_{i_j}$ or $\overline {\textbf{L}}_{i_j}$ is in the span of $\textbf{L}_{i_1}, ... ,\textbf{L}_{i_j}$. Thunder \cite{Th1} showed that $J(F)$ is non-empty provided that $I(F)$ is non-empty. We then put
$$
a_F = \max \left \{ \frac { \text{the number of }  \textbf{L}_i  \text{ in the span of }  \textbf{L}_{i_1}, ... ,\textbf{L}_{i_j}}{j}\right \}
$$
where the maximum is taken over integers $j$ from $\{1, ... ,n-1\}$ and $n$-tuples $(\textbf{L}_{i_1}, ... ,\textbf{L}_{i_n})$ from $J(F)$. Thunder proved, see (16) of \cite{Th1}, that if $F$ is of finite type and $F$ is not proportional to a power of a definite quadratic form in $2$ variables then
\begin{equation} \label{eq15}
1 \leq a_F \leq \frac{d}{n} - \frac{1}{n(n-1)} ,
\end{equation}
and the same argument shows that \eqref{eq15} holds if $F$ is of essentially finite type and $F$ is not proportional to a power of a definite quadratic form in $2$ variables. We note that $a_F=1$ if and only if the discriminant $\Delta_F$ of $F$ is non-zero.

Finally we put
$$
c_F = \begin{cases}
\binom{d-1}{n-1} -1 & \text{if}\ \Delta_F \neq 0 \\
\frac{b_F}{n!a_F}(d-(n-1)a_F)- \frac{1}{a_F} & \text{otherwise}.
\end{cases}
$$

For any set $X$ let $|X|$ denote its cardinality. Let $I'(F)$ be the subset of $I(F)$ consisting of the $n$-tuples $(\textbf{L}_{i_1}, ... , \textbf{L}_{i_n})$ of linearly independent coefficient vectors with $i_1 < i_2 < ... < i_n$. Then
$$
|I'(F)| \leq \binom {d}{n} .
$$
Since $b_F \leq n!|I'(F)|$ and $a_F \geq 1$ we find that if $\Delta_F=0$ then
$$
c_F \leq \binom {d}{n}\left(\frac{d}{a_F} - (n-1)\right)
$$
hence
\begin{equation} \label{eq16}
c_F \leq \binom{d}{n} (d-n+1).
\end{equation}
Certainly $\binom{d-1}{n-1} < \binom{d}{n}$ when $d > n$ and so \eqref{eq16} also holds when $\Delta_F \neq 0$. Further $\frac{b_F}{n!} \geq 1$ from which it follows that when $\Delta_F = 0$
$$
c_F \geq \frac{d-(n-1)a_F -1}{a_F}
$$
and so, by \eqref{eq15},
$$
c_F \geq \frac{(n-1)(d-n+1)}{d(n-1)-1}
$$
hence
\begin{equation} \label{eq17}
c_F \geq \frac{d-n+1}{d}.
\end{equation}
Note that \eqref{eq17} also holds when $\Delta_F \neq 0$.

\begin{thm} \label{Theorem 2}
Let $F(\textbf{X})$ be a decomposable form in $n$ variables with integer coefficients and degree $d$ with $d>n\geq2$ and let $m$ be an integer with $m > 1$. If $F$ is of essentially finite type then
\begin{equation} \label{eq18}
|N^*_F(m) - m^{n/d}V_F| \ll_{n,d}\mathcal{H}(F)^{c_F}(\log m)^{n-2}m^{\frac{n-1}{d-a_F}} + (\log m + \log \mathcal{H}(F))^{n-1}m^{\frac{1}{d} + \frac{n-2}{d-d_F}}
\end{equation}
 
\end{thm}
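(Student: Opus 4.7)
My plan is to reduce to Thunder's quantitative estimate \eqref{eq5} for forms of finite type by truncating the tentacles of the level set $\{|F|\le m\}$ along the singular subspaces $A_1,\dots,A_k$ and treating the lattice points lying in those tentacles separately. With a truncation radius $T$ chosen as a suitable power of $m$, I would split the count into a \emph{core} part, consisting of solutions $\textbf{a}$ with $\|\textbf{a}\|\le T$, and a \emph{tentacle} part, consisting of solutions with $\|\textbf{a}\|>T$. Because $V_F<\infty$ by the essentially-finite-type hypothesis, the measure of $\{|F|\le m\}\cap\{\|\textbf{x}\|>T\}$ is a negligible contribution to $m^{n/d}V_F$, so the volume of the truncated region already matches $m^{n/d}V_F$ up to an acceptable error.

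On the core the region is bounded, and the argument of Thunder in \cite{Th1,Th3}---resting on Evertse's quantitative Subspace Theorem \cite{JH2}, the combinatorial analysis of the $n$-tuples in $J(F)$, and the invariants $a_F$, $b_F$, $c_F$ recalled above---applies essentially unchanged and delivers the main term $m^{n/d}V_F$ together with the first error $\mathcal{H}(F)^{c_F}(\log m)^{n-2}m^{(n-1)/(d-a_F)}$ in \eqref{eq18}. The essentially-finite-type hypothesis enters here only through the finiteness of $V_F$ and through the validity of the bound \eqref{eq15} on $a_F$; the remaining steps of Thunder's machinery do not require $F$ itself to be of finite type because the tentacles have already been cut off.

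For the tentacle part, I would stratify by the maximal non-trivial intersection $A_{i_1}\cap\cdots\cap A_{i_j}$ to which a solution $\textbf{a}$ is proximate, in the sense that the corresponding factors $F_{i_1}(\textbf{a}),\dots,F_{i_j}(\textbf{a})$ are jointly responsible for the smallness of $|F(\textbf{a})|$. A dyadic decomposition in $\|\textbf{a}\|$ together with slicing each tentacle by rational translates of the associated intersection reduces the counting to Theorem \ref{Theorem 1} applied to the restriction of $F$ to a transverse rational subspace; by the definition of $d_F$ in \eqref{eq11} and by condition \eqref{eq12}, this restriction is itself of essentially finite type in at most $n-1$ variables with an effective degree of at least $d-d_F$ in the transverse directions, so Theorem \ref{Theorem 1} yields $\ll m^{1/d+(n-2)/(d-d_F)}$ per dyadic range. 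Summation over the $\ll \log m+\log\mathcal{H}(F)$ dyadic levels, each contributing an independent logarithmic factor in the $n-1$ transverse coordinates, produces the second error term. The principal obstacle will be matching the uniform dependence on $\mathcal{H}(F)$ across the core and tentacle contributions and verifying that the restriction to each transverse slice genuinely satisfies the hypotheses of Theorem \ref{Theorem 1}; both steps rely delicately on the combinatorial content of $d_F$ and on the fact that \eqref{eq12} passes to the relevant rational subspaces.
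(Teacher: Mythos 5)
Your high-level decomposition (truncate at a power of $m$, compare the bounded part with the volume, treat the unbounded part via Theorem~\ref{Theorem 1} on proper subspaces) matches the paper's skeleton, but the execution of the unbounded (``tentacle'') part has a genuine gap. You propose to stratify the large solutions by proximity to the intersections $A_{i_1}\cap\cdots\cap A_{i_j}$ and to slice along rational translates of those subspaces. There is no reason the integer points with $1\le|F(\textbf{a})|\le m$ and $\left\Vert \textbf a\right\Vert$ large cluster near those particular rational subspaces: the region $\{|F|\le m\}$ spreads out along the real zero sets of the individual linear factors $L_i$ (which are in general irrational), and the lattice points in it are controlled not by geometry near the $A_i$ but by Evertse's quantitative Subspace Theorem, which places them in a bounded number of proper rational subspaces bearing no relation to the $A_i$. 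This is exactly what Lemma~\ref{lem10} does: Lemma~\ref{lem1} gives the small-product inequality with exponent governed by $a_F$, Lemma~\ref{lem4} (the Subspace Theorem) handles $\left\Vert \textbf a\right\Vert\ge C$, and a dyadic decomposition with Lemma~\ref{lem6} handles $m^{1/(d-a_F)}\le\left\Vert \textbf a\right\Vert\le C$, yielding either a set of size $\ll\mathcal{H}(F)^{c_F}(\log m)^{n-2}m^{(n-1)/(d-a_F)}$ or $\ll(\log m+\log\mathcal{H}(F))^{n-1}$ proper rational subspaces; only then is Theorem~\ref{Theorem 1} applied to $F$ restricted to each such subspace (which one checks is again of essentially finite type). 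Your scheme supplies no mechanism for bounding the \emph{number} of subspaces and does not account for large solutions far from every $A_{i_1}\cap\cdots\cap A_{i_j}$.

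Two further points. First, you invoke the Subspace Theorem for the \emph{core}; in the paper it is needed only for the large solutions, while the core is handled by a quantitative lattice-point-versus-volume comparison (Lemmas~\ref{lem8} and~\ref{lem9}). Mere finiteness of $V_F$ does not make the truncation error ``negligible'' at the required scale: one needs the explicit tail bound of Lemma~\ref{lem8}, namely $\ll_{n,d}\mathcal{H}(F)^{c_F}m^{1/a_F}B^{(na_F-d)/a_F}(1+\log B)^{n-2}$, which at $B=m^{1/(d-a_F)}$ produces precisely the first error term of \eqref{eq18}. Second, the case where $F$ is proportional to a power of a definite quadratic form in two variables must be treated separately (Lemma~\ref{lem7}), since \eqref{eq15} fails there and the whole $a_F$-machinery breaks down; your proposal does not address it.
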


If $F$ is of finite type then $d_F=0$ and $a_F \geq 1$. Thus
$$
\frac{1}{d} + \frac {n-2}{d-d_F} = \frac{n-1}{d} < \frac{n-1}{d-a_F}
$$
and so \eqref{eq5} follows from Theorem \ref{Theorem 2}.

For the proof we appeal to Theorem \ref{Theorem 1} and, once again, to a quantitative version of the Subspace Theorem.

The discriminant $\Delta_F$ of a form as in \eqref{eq1} is given by
$$
\Delta_F = \prod_{(i_1, ... ,i_n)}det(\textbf{L}^{tr}_{i_1}, ... ,\textbf{L}^{tr}_{i_n})
$$
where the product is taken over all $n$-tuples of distinct integers $(i_1, ... ,i_n)$ with $1 \leq i_j \leq d$ for $j = 1, ... ,n$. Here $\textbf{L}^{tr}$ denotes the transpose of $\textbf{L}$.

Let $B(x,y)$ denote the Beta function, see \cite{Davis}. In 1996 Bean and Thunder \cite{BT} proved that if $\Delta_F \neq 0$ then
\begin{equation} \label{eq19}
|\Delta_F|^{\frac{(d-n)!}{d!}}V_F \leq C_n
\end{equation}
where
$$
C_n = \frac{2}{n}\prod^{n-1}_{k=1}\bigg(B( \frac{1}{n+1}, \frac{k}{n+1}) + B(\frac{n-k}{n+1}, \frac{k}{n+1}) + B(\frac{n-k}{n+1}, \frac{1}{n+1})\bigg);
$$
the case when $n=2$ was established by Bean \cite{B} in 1994. They proved that the upper bound of $C_n$ is sharp in \eqref{eq19} and that $C_n$ grows like a constant times $(2n)^n$. If $\Delta_F$ is non-zero then $a_F=1$ and $c_F= \binom{d-1}{n-1} -1$. Thus by Theorem 2 and \eqref{eq19} we have the following result.

\begin{cor} \label{Corollary 3}
Let $F(\textbf{X})$ be a decomposable form in $n$ variables with integer coefficients and degree $d$ with $d>n\geq2$ and let $m$ be an integer with $m > 1$. If $F$ is of essentially finite type  and $\Delta_F \neq 0$ then
\begin{equation} \label{eq20}
N^*_F(m) \ll_{n,d} m^{n/d}|\Delta_F|^{-\frac{(d-n)!}{n!}} +  \mathcal{H}(F)^{\binom{d-1}{n-1}-1}(\log m)^{n-2}m^{\frac{n-1}{d-1}} + (\log m + \log \mathcal{H}(F))^{n-1}m^{\frac{1}{d} + \frac{n-2}{d-d_F}}
\end{equation}
 
\end{cor}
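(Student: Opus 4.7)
The proof plan is essentially bookkeeping: combine Theorem \ref{Theorem 2} with the Bean--Thunder discriminant inequality \eqref{eq19}. First I observe that the hypothesis $\Delta_F \neq 0$ automatically places us in the first branch of the definition of $c_F$, so $c_F = \binom{d-1}{n-1} - 1$; moreover, as noted in the discussion of $a_F$ preceding Theorem \ref{Theorem 2}, the non-vanishing of the discriminant forces $a_F = 1$. Plugging these values into \eqref{eq18} turns the right-hand side of Theorem \ref{Theorem 2} into exactly
$$
\mathcal{H}(F)^{\binom{d-1}{n-1}-1}(\log m)^{n-2} m^{\frac{n-1}{d-1}} + (\log m + \log \mathcal{H}(F))^{n-1} m^{\frac{1}{d} + \frac{n-2}{d-d_F}},
$$
which already matches the last two terms of \eqref{eq20}.

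Next I would apply the trivial triangle-type bound
$$
N^*_F(m) \leq m^{n/d} V_F + \bigl| N^*_F(m) - m^{n/d} V_F \bigr|.
$$
The second summand is controlled by Theorem \ref{Theorem 2} as just rewritten. For the first summand I invoke the Bean--Thunder inequality \eqref{eq19}, which, because $\Delta_F \neq 0$, yields $V_F \leq C_n |\Delta_F|^{-(d-n)!/d!}$ with $C_n$ depending only on $n$. This produces a term of the form $m^{n/d}|\Delta_F|^{-(d-n)!/d!}$, and absorbing $C_n$ into the implicit constant gives the first term of \eqref{eq20} (up to the discrepancy, apparently typographical, between $d!$ and $n!$ in the stated exponent).

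Assembling the three contributions gives \eqref{eq20}, and the implicit constant depends only on $n$ and $d$ since the same is true of the constants appearing in Theorem \ref{Theorem 2} and in $C_n$. There is no genuine obstacle here: the corollary is a direct specialization, and the only minor point to verify carefully is that under $\Delta_F \neq 0$ one indeed has $a_F = 1$ and the simpler formula for $c_F$ kicks in, both of which are recorded explicitly in the discussion preceding the statement.
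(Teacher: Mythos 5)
Your proof is correct and is exactly the paper's argument: the paper derives the corollary by noting that $\Delta_F\neq 0$ gives $a_F=1$ and $c_F=\binom{d-1}{n-1}-1$, then combining Theorem \ref{Theorem 2} with \eqref{eq19} via the triangle inequality. Your observation that the exponent in the first term should read $(d-n)!/d!$ rather than $(d-n)!/n!$ (as \eqref{eq19} yields) is also well taken.
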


When $n=2$, $F(\textbf{X})$ is a binary form and if $\Delta_F$ is non-zero then $F$ is of essentially finite type and $d_F$ is either $0$ or $1$. Since $a_F=1$ we obtain our next result.

\begin{cor} \label{Corollary 4}
Let $F(\textbf{X})$ be a binary form with integer coefficients, degree $d$ with $d \geq3$ and $\Delta_F \neq 0$.  Let $m$ be a positive integer.  Then
\begin{equation} \label{eq21}
|N^*_F(m) - m^{2/d}V_F| \ll_{d} m^{\frac{1}{d-1}}\mathcal{H}(F)^{d-2}.
\end{equation}
 
\end{cor}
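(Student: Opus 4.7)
The plan is to apply Theorem \ref{Theorem 2} with $n=2$ and verify that the general bound collapses into the clean form \eqref{eq21}. The relevant invariants have already been identified in the paragraph preceding the corollary: $F$ is of essentially finite type with $d_F \in \{0,1\}$ and $a_F = 1$ (because $\Delta_F \neq 0$), so $c_F = \binom{d-1}{1}-1 = d-2$.

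The first step is to substitute these values into \eqref{eq18}. For $n=2$ the logarithmic factor $(\log m)^{n-2}$ is just $1$, the exponent $\frac{1}{d} + \frac{n-2}{d-d_F}$ reduces to $\frac{1}{d}$, and $(\log m + \log\mathcal{H}(F))^{n-1}$ is just $\log m + \log\mathcal{H}(F)$. Thus for $m \geq 2$, Theorem \ref{Theorem 2} directly yields
\begin{equation*}
|N^*_F(m) - m^{2/d}V_F| \ll_d \mathcal{H}(F)^{d-2}\, m^{1/(d-1)} + (\log m + \log \mathcal{H}(F))\, m^{1/d}.
\end{equation*}

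The only real calculation is to show that the second summand is absorbed by the first, and this is the step I expect to be the main (though still modest) obstacle. It rests on two routine observations. First, for a nonzero form with integer coefficients a Mahler-type inequality relating the coefficient norm of $F$ to the product $\prod \left\Vert \mathbf{L}_i \right\Vert$ gives $\mathcal{H}(F) \gg_d 1$; combined with $d-2 \geq 1$, this yields $\log \mathcal{H}(F) \ll_d \mathcal{H}(F)^{d-2}$. Second, the arithmetic identity $\frac{1}{d}+\frac{1}{d(d-1)} = \frac{1}{d-1}$, together with $\log m \ll_d m^{1/(d(d-1))}$, absorbs $m^{1/d}\log m$ into $m^{1/(d-1)}$. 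Combining these (and using $\mathcal{H}(F)^{d-2} \gg_d 1$ where needed) establishes \eqref{eq21} for $m \geq 2$.

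Finally, the edge case $m=1$ is not covered by Theorem \ref{Theorem 2} and must be treated separately, but is immediate: Theorem \ref{Theorem 1} gives $N^*_F(1)\ll_d 1$, while the Bean--Thunder inequality \eqref{eq19} together with $|\Delta_F| \geq 1$ gives $V_F \ll_d 1$; hence $|N^*_F(1)-V_F|\ll_d 1 \ll_d \mathcal{H}(F)^{d-2}$, using once more the lower bound $\mathcal{H}(F) \gg_d 1$. Underlying the whole argument is the small but essential bookkeeping that for $n=2$ the second exponent in \eqref{eq18} collapses to $\frac{1}{d}$, which is exactly what makes the clean, logarithm-free bound \eqref{eq21} possible.
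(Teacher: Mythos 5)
Your proposal is correct and follows the paper's own route: substitute $n=2$, $a_F=1$, $c_F=d-2$, $d_F\in\{0,1\}$ into Theorem \ref{Theorem 2} and absorb the lower-order term. The paper states this as an immediate consequence; you have simply written out the absorption (using $\mathcal{H}(F)\geq 1$ to get $\log\mathcal{H}(F)\ll_d\mathcal{H}(F)^{d-2}$, and $\log m\, m^{1/d}\ll_d m^{1/(d-1)}$) and handled the $m=1$ edge case, which is a genuine omission since Theorem \ref{Theorem 2} requires $m>1$.
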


Corollary \ref{Corollary 4} generalizes Mahler's result \eqref{eq6}, where $F$ is assumed to be irreducible over the rationals, to the case where $F$ has a non-zero discriminant. By \eqref{eq5} such a result holds when $F$ is of finite type but that does not give Corollary \ref{Corollary 4} in the case when $F$ has a linear factor over the rationals. Corollary \ref{Corollary 4} is required in the work of Stewart and Xiao \cite{St1} on the number of integers represented by a binary form with a non-zero discriminant and the number of $k$-free integers represented by such a form \cite{St2} and the author is grateful to Professor Fouvry for pointing this out.

The proofs of Theorems \ref{Theorem 1} and \ref{Theorem 2} build on the work of Thunder \cite{Th1}. He proceeds by establishing an upper bound for each $\textbf{x}$ in $\mathbb{R}^n$ for
$$
\frac{\prod^{n}_{j=1}|L_{i_j}(\textbf{x})|}{|det(\textbf{L}_{i_1}^{tr}, ... ,\textbf{L}_{i_n}^{tr})|}
$$
for some $n$-tuple  $(\textbf{L}_{i_1}, ... ,\textbf{L}_{i_n})$ from $I(F)$. Thunder establishes two such estimates and they are given in Lemma 5 and Lemma 6 of \cite{Th1}. Our main innovation is a modification of Lemma 6 in order to treat the more general situation when $F$ is of essentially finite type.

I would like to thank Professors Jeff Thunder and Stanley Xiao for some helpful comments on this work.

\section{Small products of linear forms}

Let $F(\textbf{X})$ be a decomposable form in $n$ variables with integer coefficients and degree $d$ with $d > n \geq 2$ as in \eqref{eq1}.

\begin{lem} \label{lem1}
If $F(\textbf{X})$  is of essentially finite type and $F$ is not proportional to a power of a definite quadratic form in $2$ variables then there is a positive number $C_1=C_1(n,d)$, which depends on $n$ and $d$, such that for every $\textbf{x}$ in $\mathbb{R}^n$ there is an $n$-tuple $(\textbf{L}_{i_1}, ... ,\textbf{L}_{i_n})$ in $J(F)$ for which
$$
\frac{\prod^{n}_{j=1}|L_{i_j}(\textbf{x})|}{|det(\textbf{L}_{i_1}^{tr}, ... ,\textbf{L}_{i_n}^{tr})|} < C_1\bigg(\frac{|F(\textbf{x})|}{\left\Vert \textbf x \right\Vert ^{d-na_F}}\bigg)^{1/a_F}\mathcal{H}(F)^{c_F}.
$$
\end{lem}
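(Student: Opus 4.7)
The plan is to adapt Thunder's proof of Lemma 6 in \cite{Th1} to the essentially finite type setting. Fix a non-zero $\textbf{x} \in \mathbb{R}^n$ with $F(\textbf{x}) \neq 0$ (so $\textbf{x}$ lies outside every $A_j$), and after relabeling arrange the linear forms so that the normalized quantities
$$
\frac{|L_1(\textbf{x})|}{\|\textbf{L}_1\|} \leq \frac{|L_2(\textbf{x})|}{\|\textbf{L}_2\|} \leq \cdots \leq \frac{|L_d(\textbf{x})|}{\|\textbf{L}_d\|}
$$
are non-decreasing. Since $F$ is not proportional to a power of a definite binary quadratic form, Thunder's argument that $J(F) \neq \emptyset$ applies, so there is a candidate $n$-tuple available.

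Next I would construct the $n$-tuple $(\textbf{L}_{i_1}, \ldots, \textbf{L}_{i_n}) \in J(F)$ greedily from the bottom of the sorted list: at each stage $j$, select the smallest-indexed $\textbf{L}_i$ that is linearly independent of $\textbf{L}_{i_1}, \ldots, \textbf{L}_{i_j}$ and compatible with the $J(F)$-condition, namely either proportional to $\overline{\textbf{L}}_{i_j}$ or with $\overline{\textbf{L}}_{i_j}$ already in the existing span. By the definition of $a_F$, the number of $\textbf{L}_i$ lying in the span of $\textbf{L}_{i_1}, \ldots, \textbf{L}_{i_j}$ is at most $a_F \cdot j$, which controls the position of the chosen $\textbf{L}_{i_j}$ in the sorted list. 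Combining this bound with the identity $|F(\textbf{x})| = \prod_{i=1}^{d} |L_i(\textbf{x})|$ and the trivial estimate $|L_i(\textbf{x})| \leq \|\textbf{L}_i\|\,\|\textbf{x}\|$ applied to the forms not selected yields
$$
\prod_{j=1}^{n} \frac{|L_{i_j}(\textbf{x})|}{\|\textbf{L}_{i_j}\|} \ll_{n,d} \left(\frac{|F(\textbf{x})|}{\|\textbf{x}\|^{d-na_F}}\right)^{1/a_F}.
$$
Dividing by $|\det(\textbf{L}^{tr}_{i_1}, \ldots, \textbf{L}^{tr}_{i_n})|$ and inserting a lower bound for this determinant in terms of $\prod_j \|\textbf{L}_{i_j}\|$ and $\mathcal{H}(F)^{-c_F}$ — where $c_F$ is set up precisely so that it absorbs the combinatorial loss coming from the multiplicity $b_F$ of a given $\textbf{L}_i$ in $J(F)$ — produces the claimed inequality.

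The main obstacle is that, in contrast to Thunder's finite type case, the annihilators of the non-trivial rational subspaces $A_j$ may contain many coefficient vectors $\textbf{L}_i$, so the pigeonhole step controlling how many $\textbf{L}_i$ can lie in a given intermediate span requires re-examination; if $\textbf{x}$ sits near one of these $A_j$, the greedy procedure is forced to begin with many nearly vanishing forms whose span could otherwise threaten the index count. The resolution is the essentially finite type hypothesis \eqref{eq12}, $A_1 \cap \cdots \cap A_k = \{\textbf{0}\}$, which guarantees that for every non-zero $\textbf{x}$ at least one $L_i$ does not vanish and hence the $J(F)$-chain can always be extended to length $n$; moreover $a_F$, being defined via $J(F)$-chains, automatically registers any such degeneracy. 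With this in hand, the quantitative estimates of \cite{Th1} go through with only the bookkeeping modifications needed to restrict the pigeonhole argument to those $\textbf{L}_i$ which do not annihilate the relevant intermediate subspace, and collecting the resulting factors delivers the asserted bound.
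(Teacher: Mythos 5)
Your plan reconstructs Thunder's argument from scratch, whereas the paper's proof is essentially a citation: Lemma \ref{lem1} is Thunder's Lemma 5 (not Lemma 6, which you name---Lemma 6 of \cite{Th1} is what the paper modifies to obtain Lemma \ref{lem2}), and the only observation needed is that Thunder's proof of his Lemma 5 uses nothing from the finite type hypothesis beyond $a_F < d/n$, which still holds here by \eqref{eq15}. Your sketch---sorting the forms by normalized value, greedily building a $J(F)$-tuple, invoking the definition of $a_F$ to bound how many coefficient vectors sit in each intermediate span---has the right shape for Thunder's argument, but is far more work than required.

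There is also a genuine misattribution. You invoke ``$F$ is not proportional to a power of a definite binary quadratic form'' in order to conclude $J(F) \neq \emptyset$; in fact $J(F) \neq \emptyset$ follows from $I(F) \neq \emptyset$, which is a consequence of \eqref{eq12} alone. What the ``not a power of a definite quadratic'' hypothesis actually provides is \eqref{eq15}, and in particular the strict inequality $a_F < d/n$, without which the exponent $d - na_F$ in the statement is non-positive and the bound ceases to decay with $\Vert\textbf{x}\Vert$. This is the single point your proposal never pins down: the whole argument, whether read off from Thunder's Lemma 5 or reconstructed as you outline, hinges on $a_F < d/n$, and this is exactly where both ``essentially finite type'' and ``not a power of a definite quadratic'' enter. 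Your framing of the obstacle---$\textbf{x}$ sitting near an $A_j$, the greedy chain stalling---misdiagnoses the issue, since the hypotheses of the lemma constrain the structure of $F$ itself, not the position of $\textbf{x}$.
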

\begin{proof}
This follows from Lemma 5 of \cite{Th1} since if $F$ is of essentially finite type and $F$ is not proportional to a power of a definite form in $2$ variables then, by \eqref{eq15}, $a_F < d/n.$
\end{proof}

Suppose that $R$ is a decomposable form in $n$ variables with integer coefficients and degree $d$ with $d> n \geq 2$ as in \eqref{eq1}. Suppose further that $R$ is irreducible over $\mathbb{Q}$ and has content $1$. Then, by Lemme 1 on p. 85 of \cite{BC}, $R$ is equivalent under $\operatorname{GL}_n(\mathbb{Z})$ to a form for which the coefficient of $X_1^d$, say $A$, is non-zero. Thus we may assume that
$$
R(\textbf{X}) =A\prod_{j=1}^{d}( X_1 + \mu_{2,j}X_2 + ... +\mu_{n,j}X_n)
$$
and, with $X_i=1$ and $X_j=0$ for $i$ from $\{2, ... ,n\}$ and $j=2, ... ,n$ with $j\neq i$,
$$
R(X_1, 0, ... ,0,1,0, ... 0) = A\prod_{j=1}^{d}( X_1 +\mu_{i,j})
$$
is a polynomial with integer coefficients. Therefore there is a linear form $L(\textbf{X})= X_1 + \mu_{2,1}X_2 + ... +\mu_{n,1}X_n$ which divides $R$ with $\mu_{i,1}$ a root of 
$R(X_1, 0, ... ,0,1,0, ... ,0)$ where  the $1$ is in the $i$-th coordinate. Put $\mathbb{K} = \mathbb{Q}(\mu_{2,1}, ... ,\mu_{n,1})$. Then
$$
N_{\mathbb{K}/\mathbb{Q}}(L(\textbf{X})) = \prod_{\sigma}\sigma (L(\textbf{X}))
$$
where the product is taken over the isomorphisms of $\mathbb{K}$ into $\mathbb{C}$. $N_{\mathbb{K}/\mathbb{Q}}(L(\textbf{X}))$ is in $\mathbb{Q}[\textbf{X}]$ and is irreducible, see Th\'eor\`eme 2 of \cite{BC}. Thus, since $N_{\mathbb{K}/\mathbb{Q}}(L(\textbf{X}))$ divides $R$, $R$ is a rational multiple of $N_{\mathbb{K}/\mathbb{Q}}(L(\textbf{X}))$. The coordinates of $A\sigma(L(\textbf{X}))$ are algebraic integers and so $A^dN_{\mathbb{K}/\mathbb{Q}}(L(\textbf{X}))$ is a polynomial with integer coefficients. Since $R$ has content $1$ there exists a non-zero integer $m$ such that
$$
mR = A^d\prod_{\sigma}\sigma(L(\textbf{X})) = \prod_{\sigma}\sigma(AL(\textbf{X}))
$$
hence
\begin{equation} \label{eq22}
R=\frac{1}{m}\prod_{\sigma}\sigma(AL(\textbf{X})).
\end{equation}
In particular
\begin{equation} \label{eq22a}
R=\prod_{\sigma}M_{\sigma}(\textbf{X})
\end{equation}
where $M_{id}(\textbf{X}) = \frac{1}{m}AL(\textbf{X})$ and $M_{\sigma}(\textbf{X}) = \sigma(AL(\textbf{X}))$ for $\sigma$ different from the identity.

Recall that if $T$ is in $\operatorname{GL}_n(\mathbb{Z})$ then the form $G(\textbf{X})=F(T(\textbf{X}))$ is said to be equivalent to $F$. Then $V_F=V_G$ and $N^*_G(m)=N^*_F(m)$ for each positive integer $m$.

\begin{lem} \label{lem2}
If $F(\textbf{X})$  is of essentially finite type and $\mathcal{H}(F)$ is minimal among forms equivalent to $F$ then there is a positive number $C_2=C_2(n,d)$, which depends on $n$ and $d$, such that for every $\textbf{x}$ in $\mathbb{R}^n$ there is an $n$-tuple $(\textbf{L}_{i_1}, ... ,\textbf{L}_{i_n})$ in $I'(F)$ and there is a polynomial $G(\textbf{X})$ in $\mathbb{Z}[\textbf{X}]$ of degree $d_0$, with $d_0\geq d-d_F$, which divides $F(\textbf{X})$ in $\mathbb{Z}[\textbf{X}]$ for which
$$
\frac{\prod^{n}_{j=1}|L_{i_j}(\textbf{x})|}{|det(\textbf{L}_{i_1}^{tr}, ... ,\textbf{L}_{i_n}^{tr})|} < C_2\frac{|F(\textbf{x})|^{1/d}|G(\textbf{x})|^{\frac{n-1}{d_0}}}{\mathcal{H}(F)^{1/d}}.
$$
\end{lem}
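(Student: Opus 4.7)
My plan is to adapt the argument of Lemma 6 of \cite{Th1}, replacing the single averaging over all $d$ linear factors of $F$ with a two-part averaging that isolates the irreducible factors responsible for the merely essentially-finite (rather than finite) behaviour at $\mathbf{x}$. We may assume $F(\mathbf{x})\neq 0$, since otherwise the lemma is trivial. The first step is to construct the divisor $G$. For a threshold $\tau=\tau(\mathbf{x})$ to be chosen later, let
$$S(\mathbf{x}) = \{\, j \in \{1,\ldots,k\} : |L_h(\mathbf{x})|/\|\mathbf{L}_h\| < \tau \text{ for every } L_h \text{ dividing } F_j \,\}.$$
If $\tau$ is calibrated so that $\mathbf{x}$ lies sufficiently close to every $A_j$ with $j\in S(\mathbf{x})$, then the subspaces $\{A_j\}_{j\in S(\mathbf{x})}$ have a common non-trivial intersection (intuitively, the direction of $\mathbf{x}$), and the definition of $d_F$ in \eqref{eq11} forces $\sum_{j\in S(\mathbf{x})}l_jd_j\leq d_F$. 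Setting $G(\mathbf{X})=\prod_{j\notin S(\mathbf{x})}F_j(\mathbf{X})^{l_j}$ then yields a polynomial in $\mathbb{Z}[\mathbf{X}]$ that divides $F(\mathbf{X})$ and has degree $d_0\geq d-d_F$, as required.

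Next I would order the $d$ linear factors so that $\rho_1\leq\rho_2\leq\cdots\leq\rho_d$, where $\rho_h:=|L_h(\mathbf{x})|/\|\mathbf{L}_h\|$, and then select $(\mathbf{L}_{i_1},\ldots,\mathbf{L}_{i_n})\in I'(F)$ greedily as the first $n$ linearly independent coefficient vectors in this ordering; existence is guaranteed by \eqref{eq12}. To bound $\prod_{j=1}^n|L_{i_j}(\mathbf{x})|$, I would combine two identities. First, $\prod_{h=1}^d \rho_h = |F(\mathbf{x})|/\mathcal{H}(F)$ gives $\rho_{i_1}\leq (|F(\mathbf{x})|/\mathcal{H}(F))^{1/d}$. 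Second, restricting the product to the $d_0$ linear factors of $G$, we have $\prod_{L_h\mid G}\rho_h = |G(\mathbf{x})|/\mathcal{H}(G)$; the greedy-plus-ordering construction ensures that the remaining $n-1$ selected $\rho_{i_j}$ can be controlled by an averaging of the $G$-factors, yielding an aggregate bound of the form $C(n,d)\bigl(|G(\mathbf{x})|/\mathcal{H}(G)\bigr)^{(n-1)/d_0}$. The point is that factors of $F/G$ — which by definition produce anomalously small $\rho_h$ — are avoided whenever possible in the greedy selection, so the selected ratios cluster with those of $G$.

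Finally, the hypothesis that $\mathcal{H}(F)$ is minimal among $\operatorname{GL}_n(\mathbb{Z})$-equivalent forms will be used, via a Minkowski-type argument in the spirit of \cite{BC} applied to the representation \eqref{eq22a} of each irreducible factor as an algebraic norm, to obtain a matching lower bound on $|\det(\mathbf{L}_{i_1}^{tr},\ldots,\mathbf{L}_{i_n}^{tr})|$ of the shape $\prod_j\|\mathbf{L}_{i_j}\|$ times a negligible power of $\mathcal{H}(F)$; together with $\mathcal{H}(G)\leq\mathcal{H}(F)$ this collapses the quotient $\prod\|\mathbf{L}_{i_j}\|/\mathcal{H}(G)^{(n-1)/d_0}$ into the claimed $\mathcal{H}(F)^{-1/d}$. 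The main obstacle will be the calibration of the threshold $\tau$: it must be small enough that the subspaces $\{A_j\}_{j\in S(\mathbf{x})}$ genuinely meet non-trivially, thereby letting \eqref{eq11} produce $d_0\geq d-d_F$, yet large enough that the surviving $G$-factors support the averaging estimate uniformly in $\mathbf{x}$. Verifying that a single choice of $\tau$, depending only on $n$ and $d$, works for every $\mathbf{x}\in\mathbb{R}^n$ will be the delicate part of the argument.
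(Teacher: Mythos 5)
Your proposal does not follow the paper's route (a lattice/successive-minima argument modifying Lemma 6 of \cite{Th1}), and as written it has two genuine gaps. The first is the construction of $G$. You define $S(\textbf{x})$ by a smallness threshold on the normalized values $|L_h(\textbf{x})|/\Vert\textbf{L}_h\Vert$ and assert that the corresponding subspaces $A_j$ then meet non-trivially, so that \eqref{eq11} bounds their degree sum by $d_F$. That inference fails: subspaces each individually close to the direction of $\textbf{x}$ need not intersect except at $\textbf{0}$ (two distinct lines through the origin in $\mathbb{R}^2$ can both nearly contain $\textbf{x}$), and even a common real intersection would not engage \eqref{eq11}, which requires a common non-zero \emph{integer} point. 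The paper avoids this entirely: it renormalizes the factors to $L_i'$, forms the rank-$n$ lattice $\Lambda$ generated by the columns of the $d\times n$ matrix of coefficient vectors, takes a reduced basis $\textbf{z}_1,\dots,\textbf{z}_n$ with preimages $\textbf{a}_1,\dots,\textbf{a}_n\in\mathbb{Z}^n$, and defines $G$ as the largest primitive integer divisor of $F$ with $G(\textbf{a}_1)\neq 0$. The irreducible factors omitted from $G$ all vanish at the non-zero integer point $\textbf{a}_1$, so their degree sum is at most $d_F$ by \eqref{eq11}, giving $d_0\geq d-d_F$ with no threshold $\tau$ to calibrate.

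The second gap is the pair of quantitative claims in your middle and final steps. A greedy choice of linearly independent forms ordered by $\rho_h$ does not give $\prod_{j=2}^{n}\rho_{i_j}\ll_{n,d}\bigl(|G(\textbf{x})|/\mathcal{H}(G)\bigr)^{(n-1)/d_0}$: linear independence can force you to select a form whose $\rho$ vastly exceeds the geometric mean of the $G$-factors (for instance when all the small factors lie in a hyperplane), and it is precisely this degeneracy that the quantities $a_F$ and $c_F$ in Lemma \ref{lem1} are built to absorb; your bound carries no such loss and is false in general. Likewise the ``matching lower bound'' $|det(\textbf{L}^{tr}_{i_1},\dots,\textbf{L}^{tr}_{i_n})|\gg\prod_j\Vert\textbf{L}_{i_j}\Vert$ up to a negligible power of $\mathcal{H}(F)$ fails for near-degenerate $n$-tuples. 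The paper circumvents both difficulties simultaneously: Minkowski's theorem gives $\sum_{I'(F)}|det|^2\gg_n\lambda_1^{2(n-1)}\lambda_n^2$ for the renormalized determinants; the arithmetic--geometric mean inequality applied to the coordinates of $\textbf{z}_1$ indexed by the factors of $G$, together with the fact that $G(\textbf{a}_1)$ is a non-zero integer, yields $\lambda_1^2\geq d_0|F(\textbf{x})|^{2/d}/|G(\textbf{x})|^{2/d_0}$; the minimality of $\mathcal{H}(F)$ in its equivalence class yields $\lambda_n^2\gg_{n,d}\mathcal{H}(F)^{2/d}$; and one then selects the largest of the at most $\binom{d}{n}$ summands and undoes the renormalization. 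You would need to import this lattice mechanism (or an equivalent substitute) for your outline to close.
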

\begin{proof}
The result holds if $F(\textbf{x})=0$ since then one of the linear forms $L_i(\textbf{x})$ is $0$. Thus we may suppose that $F(\textbf{x})\neq 0$.

The decomposition \eqref{eq1} of $F$ into a product of linear forms is not determined uniquely since if $t_1, ... ,t_d$ are complex numbers with $t_1...t_d = 1$ then we also have
$$
F(\textbf{X}) =t_1L_{1}(\textbf{X})...t_dL_{d}(\textbf{X}).
$$
We may use \eqref{eq22a} to find, for each integer $r$ with $1\leq r \leq k$, linear forms $U_{r,1}(\textbf{X}), ... ,U_{r,d_r}(\textbf{X})$ for which
$$
F_{r}(\textbf{X}) = U_{r,1}(\textbf{X}) ... U_{r,d_r}(\textbf{X})
$$
Thus we may suppose that each linear form $L_i(\textbf{X})$ in \eqref{eq1} is plus or minus $|C_0|^{\frac{1}{d}}U_{p,q}(\textbf{X})$ for some $p$ with $1 \leq p \leq k$ and $q$ with $1 \leq q \leq d_p$.

 For each $\textbf{x}$ in $\mathbb{R}^n$ for which $F(\textbf{x})\neq 0$ put
$$
L_i^{'}(\textbf{X}) = \frac{|F(\textbf{x})|^{1/d}}{|L_i(\textbf{x})|}L_i(\textbf{X})
$$
for $i=1, ... ,d$ and observe that $F(\textbf{X})= L_1^{'}(\textbf{X}) ... L_d^{'}(\textbf{X})$.

Without loss of generality we may suppose that the forms $L_1(\textbf{X}), ... ,L_{r_1}(\textbf{X})$ are from $\mathbb{R}[\textbf{X}], d=r_1+2r_2$ and $L_{r_1+1}(\textbf{X}), ... ,L_{d}(\textbf{X})$ have complex coefficients and are arranged so that $L_i(\textbf{X})= \overline L_{i+r_2}(\textbf{X})$ for $i=r_1+1, ... ,r_1+r_2.$  Let $E^d$ be the set of vectors $(x_1, ... ,x_d)$ with $x_1, ... ,x_{r_1}$ in $\mathbb{R}$ and $x_{r_1+1}, ... ,x_d$ in $\mathbb{C}$ with $x_i=\overline x_{i+r_2}$ for $i=r_1+1, ... ,r_1+r_2$. Then $E^d$ is $d$-dimensional Euclidean space by means of the usual Hermitian inner product on $\mathbb{C}^d$.

Following the proof of Lemma 6 of \cite{Th1} we let $M$ be the $d\times n$ matrix with rows $\textbf{L}^{'}_{1}  , ... , \textbf{L}^{'}_{d}$ and put
$$
M =  \begin{pmatrix} \textbf{m}_1^{tr} & , ... , &\textbf{m}_n^{tr} \end{pmatrix} .
$$
By our ordering of $L_1(\textbf{X}), ... ,L_{d}(\textbf{X})$ we have $\textbf{m}_1, ... ,\textbf{m}_n$ in $E^d$ and so
\begin{equation} \label{eq23}
\left\Vert \wedge^{n}_{j=1} \textbf{m}_j \right\Vert ^2 = \sum_{I'(F)} | det(\textbf{L}^{' tr}_{i_1} , ... , \textbf{L}^{' tr}_{i_n}) |^2 ,
\end{equation}
where $I^{'}(F)$ denotes the set of linearly independent coefficient vectors $(\textbf{L}^{' }_{i_1} , ... , \textbf{L}^{' }_{i_n})$ with $i_1 < ... <i_n$ and where $\wedge$ denotes the wedge product in the Grassman algebra, see Chapter I of \cite{Sch4}.

Let $\lambda_1 \leq ... \leq \lambda_n$ be the successive minima of the $n$-dimensional lattice $\Lambda$ generated by $\textbf{m}_1, ... , \textbf{m}_n$ in $E^d$ with respect to the unit ball. By Minkowski's Theorem on Successive Minima
\begin{equation} \label{eq24}
\lambda_1^2 ... \lambda_n^2 \ll_n det(\Lambda)^2 = \left\Vert \wedge^{n}_{j=1} \textbf{m}_j \right\Vert ^2 .
\end{equation}

Let $\textbf{z}_1, ... ,\textbf{z}_n$ be a basis for $\Lambda$ for which $\left\Vert  \textbf{z}_j \right\Vert \leq j\lambda_j$ for $j=1, ... ,n$, see p.191 of \cite{Sch4}. Then there is a $T$ in $\operatorname{GL}_n(\mathbb{Z})$ with $T=(\textbf{a}_1^{tr}, ... , \textbf{a}_n^{tr})$ for which
$$
MT = (\textbf{z}_1^{tr}, ... , \textbf{z}_n^{tr}).
$$

Put $\textbf{z}_j = (z_{j,1}, ... ,z_{j,d})$ and observe that $z_{j,i} = L_i^{'}(\textbf{a}_j)$ for $j= 1, ... ,n$ and $i= 1, ... ,d$.
 We have
\begin{equation} \label{eq25}
\lambda_1^2 \geq \left\Vert \textbf{z}_1 \right\Vert ^2 = \sum _{i=1}^d |z_{1,i}|^2 = \sum _{i=1}^d |L_i^{'}(\textbf{a}_1)|^2.
\end{equation}
It is at this point that we modify Thunder's proof of Lemma 6 of \cite{Th1}.

Let $G$ be the primitive polynomial in $\mathbb{Z}[\textbf{X}]$ of largest degree which divides $F$ and for which $G(\textbf{a}_1)$ is non-zero. By \eqref{eq10} and the definition of $d_F$, see \eqref{eq11}, the degree $d_0$ of $G$ is at least $d - d_F$. Let $i_1, ... ,i_{d_0}$ be such that
\begin{equation} \label{eq26}
L_{i_1}(\textbf{X}) ... L_{i_{d_0}}(\textbf{X}) = \pm |C_0|^{\frac {d_0}{d}} G(\textbf{X}).
\end{equation}
Then by \eqref{eq25}
$$
\lambda_1^2 \geq  \sum _{j=1}^{d_0} |L_{i_j}^{'}(\textbf{a}_1)|^2
$$
and by the arithmetic-geometric mean inequality
$$
\lambda_1^2 \geq  d_0\prod _{j=1}^{d_0} |L_{i_j}^{'}(\textbf{a}_1)|^{\frac {2}{d_0}}.
$$
Thus, by \eqref{eq26}, 
$$
\lambda_1^2 \geq d_0\frac{|F(\textbf{x})|^{\frac{2}{d}}}{|G(\textbf{x})|^{\frac{2}{d_0}}}|G(\textbf{a}_1)|^{\frac{2}{d_0}}
$$
and since $G(\textbf{a}_1)$ is a non-zero integer we find that
\begin{equation} \label{eq27}
(\lambda_1 ... \lambda_{n-1})^2 \geq \lambda_1^{2(n-1)} \geq \frac{|F(\textbf{x})|^{\frac {2(n-1)}{d}}}{|G(\textbf{x})|^{\frac{2(n-1)}{d_0}}}.
\end{equation}

Now
$$
n^3\lambda_n^2 \geq \sum_{j=1}^{n} (j\lambda_j)^2 \geq \sum_{j=1}^{n} \left\Vert \textbf{z}_j \right\Vert ^2 = \sum_{j=1}^{n} \sum_{i=1}^{d} |z_{j,i}|^2
$$
so
$$
n^3\lambda_n^2 \geq \sum_{i=1}^{d} \left\Vert (z_{1,i}, ... ,z_{n,i})\right\Vert ^2
$$
and by the arithmetic-geometric mean inequality
$$
n^3\lambda_n^2 \geq d(\prod_{i=1}^{d} \left\Vert (z_{1,i}, ... ,z_{n,i})\right\Vert ^2)^{\frac{1}{d}} \geq d(\mathcal{H}(F \circ T))^{\frac{2}{d}}.
$$
Since we have assumed that $\mathcal{H}(F)$ is minimal among forms equivalent to it we have
\begin{equation} \label{eq28}
\lambda_n^2 \geq \frac{d}{n^3}(\mathcal{H}(F))^{\frac{2}{d}}.
\end{equation}

By \eqref{eq23}, \eqref{eq24}, \eqref{eq27} and  \eqref{eq28}
$$
\sum_{I'(F)} | det(\textbf{L}^{' tr}_{i_1} , ... , \textbf{L}^{' tr}_{i_n} )|^2  \gg_{n,d} \frac{|F(\textbf{x})|^{\frac {2(n-1)}{d}}}{|G(\textbf{x})|^{\frac{2(n-1)}{d_0}}}\mathcal{H}(F)^{\frac{2}{d}}.
$$
The number of terms in the sum above is at most $\binom {d}{n}$ hence there is an $n$-tuple $(i_1, ... ,i_n)$ for which
$$
| det(\textbf{L}^{' tr}_{i_1} , ... , \textbf{L}^{' tr}_{i_n}) |  \gg_{n,d} \frac{|F(\textbf{x})|^{\frac {n-1}{d}}}{|G(\textbf{x})|^{\frac{n-1}{d_0}}}\mathcal{H}(F)^{\frac{1}{d}}.
$$
Since
$$
| det(\textbf{L}^{' tr}_{i_1} , ... , \textbf{L}^{' tr}_{i_n}) | = \frac{| det(\textbf{L}^{ tr}_{i_1} , ... , \textbf{L}^{ tr}_{i_n}) | |F(\textbf{x})|^{\frac{n}{d}}}{\prod_{j=1}^{n}|L_{i_j}(\textbf{x})|}
$$
the result follows.

\end{proof}

\section{Bounds from the Subspace Theorem}

Let $\mathbb{K}$ be an algebraic number field of degree $d$ over $\mathbb{Q}$ with ring of algebraic integers  $\mathcal{O}_{\mathbb{K}}$ and let $M(\mathbb{K})$ be the set of equivalence classes of absolute values on $\mathbb{K}$. Suppose that $\mathbb{K}$ has $r_1$ real embeddings $\sigma_1, ... ,\sigma_{r_1}$ and $2r_2$ pairs of complex embeddings with $\sigma_{r_1+i} = \overline {\sigma_{r_1 +r_2 + i}}$ for $i = 1, ... ,r_2.$ Then $M(\mathbb{K})$ consists of $r_1+r_2$ Archimedean absolute values represented by, for each $x$ in $\mathbb{K}$,
$$
|x|_{v_i} = |\sigma_i(x)|
$$
for $i= 1, ... ,r_1 + r_2$. We put $n_{v_i} = 1$ for $i=1, ... ,r_1$ and $n_{v_i}=2$ for $i=r_1+1, ... ,r_1 + r_2.$ The non-Archimedean absolute values correspond to prime ideals $\mathcal{P}$ of $\mathcal{O}_{\mathbb{K}}$. For each prime ideal $\mathcal{P}$ and each $x$ in $\mathbb{K}$ with $x\neq 0$
$$
|x|_{v_{\mathcal{P}}} = p^{-(ord_{\mathcal{P}} x)/e}
$$
where $ord_{\mathcal{P}} x$ denotes the order of $\mathcal{P}$ in the fractional ideal generated by $x$ and where $e$ is the index of ramification. We put $n_{v_\mathcal{P}} = ef$ where $f$ is the residue field degree.

We extend this to $n$-tuples $\textbf{x} = (x_1, ... ,x_n)$ with $\textbf{x} \neq \textbf{0}$ and $x_i$ in $\mathbb{K}$ for $i=1, ... ,n$ by 
$$
|\textbf{x}|_{v} = (\sum_{i=1}^{n} |x_i|_{v}^{2})^{1/2}
$$
if $v$ is infinite,
$$
|\textbf{x}|_{v} = max( |x_1|_v, ... ,|x_n|_v)
$$
if $v$ is finite.

We now define the field height $H_{\mathbb{K}}(\textbf{x})$ of $\textbf{x}$ by
$$
H_{\mathbb{K}}(\textbf{x}) = \prod_{v}|\textbf{x}|_v^{n_v}.
$$
Let $L(\textbf{X}) = \alpha_1X_1 + ... + \alpha_nX_n$ be a linear form with coefficients in $\mathbb{K}$. We define the field height of $L(\textbf{X})$, $H_{\mathbb{K}}(L) $ by
$$
H_{\mathbb{K}}(L) = H_{\mathbb{K}}((\alpha_1, ... ,\alpha_n))
$$
and the absolute height $H(L)$ by
$$
H(L) = H_{\mathbb{K}}(L)^{\frac{1}{d}}
$$
where $d$ is the degree of ${\mathbb{K}}$ over $\mathbb{Q}$.

Suppose that $R$ is an irreducible decomposable form of content 1 as in \eqref{eq22}. By Lemma 2a, Chapter III, of \cite{Sch4} 
$$
H_{\mathbb{K}}(AL) = \frac {1}{m} \prod_{\sigma} \left\Vert \sigma (AL)\right\Vert .
$$
Thus
$$
\mathcal{H} (R) = H_{\mathbb{K}}(AL) = H(AL)^d
$$
By the product formula $H(AL)= H(L)$ and so
\begin{equation} \label{eq29}
\mathcal {H}(R)  \geq H(L).
\end{equation}

Suppose that $F$ has the form \eqref{eq1} and \eqref{eq10} and that $L_1(\textbf{X}), ... ,L_d(\textbf{X})$ are as in the proof of Lemma 6. Then, by the product formula and \eqref{eq29}
\begin{equation} \label{eq30}
\mathcal {H}(F)  \geq H(L_{i})
\end{equation}
for $i=1, ... ,d$ and, as a consequence,
\begin{equation} \label{eq30a}
\mathcal {H}(F)  \geq 1.
\end{equation}
Also note that if $T$ is in $\operatorname{GL}_n(\mathbb{Z})$ then $F \circ T =(L_1 \circ T) ... (L_n \circ T) $ and \eqref{eq30} holds with $F$ replaced by $F \circ T$ and $L_i$ by $L_i \circ T$ for $i=1, ... ,d.$

In 1989 Schmidt \cite{Sch2} established a quantitative version of the Subspace Theorem \cite{Sch1}. This was subsequently refined by Evertse \cite{JH2} who proved the following.

\begin{lem} \label{lem3}
Let $L_1, ... ,L_n$ be linearly independent linear forms in $n$ variables with algebraic coefficients, $H(L_i) \leq H$ and $[\mathbb{Q}(L_i):\mathbb{Q}] \leq D$ for $i= 1, ... ,n$. For every $\delta$ with $0 < \delta <1$ there are $t$ proper rational subspaces $T_1, ... ,T_t$ of $\mathbb{Q}^n$ with
\begin{equation} \label{eq31a}
t \leq 2^{60n^2}\delta^{-7n} \log 4D \log \log 4D
\end{equation}
such that every primitive integral solution $\textbf{x}$ of
\begin{equation} \label{eq31}
\frac{|L_{1}(\textbf{x}) ... L_{n}(\textbf{x})|}{|det(\textbf{L}_{1}^{tr}, ... ,\textbf{L}_{n}^{tr})|} < \left\Vert \textbf x \right\Vert ^{-\delta}
\end{equation}
with $H(\textbf{x}) \geq H$ lies in $T_1 \cup ... \cup T_t.$

\end{lem}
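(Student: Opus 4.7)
The statement is a black-box citation rather than a lemma the author intends to reprove: Lemma \ref{lem3} is Evertse's quantitative Subspace Theorem from \cite{JH2}, and in the paper the ``proof'' will simply consist of quoting the relevant result (with a minor cosmetic translation of normalizations, if needed, between $H_{\mathbb{K}}$, $H$, and $\left\Vert\cdot\right\Vert$ for primitive integral $\textbf{x}$). So my plan for this item in the paper is to write a one-line proof pointing to the appropriate numbered theorem in \cite{JH2}.

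Were the goal instead to sketch the mathematical content of Evertse's argument, the route would follow the by-now standard Schmidt/Schlickewei/Evertse strategy. First one partitions primitive integral solutions $\textbf{x}$ of \eqref{eq31} according to the size of $\left\Vert\textbf{x}\right\Vert$: one handles a bounded range of ``small'' solutions via the geometry of numbers (Minkowski's successive-minima theorem applied to the parallelepiped cut out by $|L_i(\textbf{x})|_v \le \text{(something)}$), which forces those solutions into $O_{n,\delta}(\log 4D\log\log 4D)$ proper rational subspaces. For ``large'' solutions, the classical Roth-type machinery takes over: one builds an auxiliary multilinear polynomial in many blocks of variables that vanishes to high index at a hypothetical tuple of very good solutions, uses Dyson's lemma (in the multilinear Roth/Faltings form) to preclude such vanishing, and concludes that only finitely many geometric classes — again, a controlled number of subspaces — can host the large solutions. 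Summing the two contributions gives the bound \eqref{eq31a}.

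The main obstacle in carrying out such a proof from scratch would be the explicit dependence on $n$ and $\delta$ in \eqref{eq31a}, which is the whole point of the quantitative theorem. Evertse's refinement over Schmidt and Schlickewei lives precisely in the careful bookkeeping for the number and height of the exceptional subspaces produced at each stage — this is where $2^{60n^2}\delta^{-7n}$ comes from — and it is not something that admits a short, self-contained proof. For the present paper that technology is used only as input, and I would not attempt to redo it; I would simply cite \cite{JH2}.
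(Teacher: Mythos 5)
Your proposal matches the paper exactly: the author's proof is the one-line citation ``This is the Corollary of the main theorem of Evertse \cite{JH2}.'' Your background sketch of the Schmidt--Schlickewei--Evertse machinery is reasonable context but not needed; simply citing \cite{JH2} is precisely what the paper does.
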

\begin{proof}
This is the Corollary of the main theorem of Evertse \cite{JH2}.

\end{proof}

We shall use Lemma \ref{lem1} and Lemma \ref{lem3} to prove our next result.

\begin{lem} \label{lem4}
Let $F$ be a decomposable form in $n$ variables with integer coefficients and degree $d$ with $d > n \geq 2$ as in \eqref{eq1}. Suppose that $F$ is of essentially finite type and that $F$ is not proportional to a power of a definite quadratic form in $2$ variables. Put
\begin{equation} \label{eq32}
C = max(C_1, m^{\frac{1}{a_F}}, m^{\frac{1}{d}}\mathcal{H}_0(F)^{1+c_F})^{4a_F(n-1)}
\end{equation}
where $C_1$ is given in Lemma \ref{lem1}. There is a positive number $c$, which is computable in terms of $n$ and $d$, and there are $t$ proper rational subspaces $T_1, ... ,T_t$ of $\mathbb{Q}^n$ with $t \leq c$ such that if $\textbf{a}$ is an integer point with $\left\Vert \textbf a \right\Vert \geq C$ for which
$$
1 \leq |F(\textbf{a})| \leq m
$$
then $\textbf{a}$ is in $T_1\cup ... \cup T_t$.

\end{lem}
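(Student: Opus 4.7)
The plan is to apply Evertse's quantitative Subspace Theorem (Lemma \ref{lem3}) to the $n$-tuples of linear forms produced by Lemma \ref{lem1}. First, since $N^*_F(m)$ is invariant under $\operatorname{GL}_n(\mathbb{Z})$-equivalence, I replace $F$ by the equivalent form achieving $\mathcal{H}(F) = \mathcal{H}_0(F)$. An arbitrary integer point $\textbf{a}$ with $1 \leq |F(\textbf{a})| \leq m$ factors uniquely as $\textbf{a} = k\textbf{a}'$ with $\textbf{a}'$ primitive and $k$ a positive integer; since $F(\textbf{a}') \in \mathbb{Z}\setminus\{0\}$ and $|F(\textbf{a})| = k^d |F(\textbf{a}')|$, one has $k \leq m^{1/d}$, hence $\|\textbf{a}'\| \geq \|\textbf{a}\|/m^{1/d}$. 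Because rational subspaces are scaling invariant, it suffices to confine $\textbf{a}'$ to the claimed bounded union.

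Applying Lemma \ref{lem1} to $\textbf{a}'$ and using $|F(\textbf{a}')| \leq m$ produces an $n$-tuple $(\textbf{L}_{i_1},\dots,\textbf{L}_{i_n}) \in J(F)$ satisfying
$$
\frac{\prod_{j=1}^n |L_{i_j}(\textbf{a}')|}{|\det(\textbf{L}_{i_1}^{tr},\dots,\textbf{L}_{i_n}^{tr})|} < C_1 \, m^{1/a_F} \, \mathcal{H}_0(F)^{c_F} \, \|\textbf{a}'\|^{-(d-na_F)/a_F}.
$$
I then set $\delta := 1/(4 a_F(n-1))$. By \eqref{eq15}, $a_F \leq d/n$, so $\delta \geq n/(4d(n-1))$ has a lower bound depending only on $n$ and $d$, and $d - na_F \geq 1/(n-1)$, whence $(d-na_F)/a_F - \delta \geq 3\delta$. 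The right-hand side above is therefore less than $\|\textbf{a}'\|^{-\delta}$ as soon as $\|\textbf{a}'\|^{3\delta} > C_1 m^{1/a_F}\mathcal{H}_0(F)^{c_F}$. The value of $C$ in \eqref{eq32} is calibrated exactly so that $\|\textbf{a}\| \geq C$, combined with $\|\textbf{a}'\| \geq \|\textbf{a}\|/m^{1/d}$, delivers this inequality: each of the three arguments inside the maximum, raised to the exponent $4a_F(n-1) = 1/\delta$, absorbs one of the three factors on the right, with the internal $m^{1/d}$ in the third argument offsetting the loss from the primitivization. The same lower bound on $\|\textbf{a}\|$ simultaneously yields $\|\textbf{a}'\| \geq \mathcal{H}_0(F) \geq \max_j H(L_{i_j})$ by \eqref{eq30}, so the height hypothesis $H(\textbf{a}') = \|\textbf{a}'\| \geq H$ of Lemma \ref{lem3} is also met.

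Both hypotheses of Lemma \ref{lem3} being verified, $\textbf{a}'$ lies, for each fixed ordered $n$-tuple, in one of at most $2^{60 n^2}\delta^{-7n}\log(4d)\log\log(4d)$ proper rational subspaces. Since there are at most $n!\binom{d}{n}$ such $n$-tuples from $I(F)$, the union over all of them yields a total of $c = c(n,d)$ proper rational subspaces containing every $\textbf{a}'$ and therefore every $\textbf{a}$. The principal obstacle is the exponent bookkeeping in the middle paragraph: the precise value $\delta = 1/(4a_F(n-1))$, the apportionment of $C_1$, $m^{1/a_F}$, and $\mathcal{H}_0(F)^{c_F}$ across the three arguments of the maximum in \eqref{eq32}, and the compensation for the factor $m^{1/d}$ lost in the primitivization must all match up so that the single condition $\|\textbf{a}\| \geq C$ supplies both the subspace inequality \eqref{eq31} and the height condition.
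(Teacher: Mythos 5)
Your argument is correct and follows essentially the same route as the paper: pass to the primitive point $\textbf{a}'$ (losing at most $m^{1/d}$ in the norm), use Lemma \ref{lem1} to produce a tuple in $J(F)$ with small normalized product, choose $\delta \asymp 1/(a_F(n-1))$ so that $\left\Vert \textbf{a}\right\Vert \geq C$ forces both inequality \eqref{eq31} and the height condition $\left\Vert \textbf{a}'\right\Vert \geq \mathcal{H}_0(F) \geq H(L_{i_j})$, and finish with Lemma \ref{lem3} summed over the at most $n!\binom{d}{n}$ tuples. The only cosmetic differences are that the paper applies Lemma \ref{lem1} to $\textbf{a}$ before primitivizing and takes $\delta=\min(\frac{1}{2},\frac{d-na_F}{4a_F})$ rather than $\frac{1}{4a_F(n-1)}$; your exponent bookkeeping for $C$, including the offset of the $m^{1/d}$ lost to primitivization, checks out.
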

\begin{proof}
Since $F$ is of essentially finite type and $F$ is not proportional to a power of a definite quadratic form in $2$ variables by \eqref{eq15}
\begin{equation} \label{eq33}
\frac{d-na_F}{a_F} \geq \frac{1}{a_F(n-1)} \geq \frac {n}{d(n-1)}.
\end{equation}
Put
$$
\delta = min(\frac{1}{2} , \frac{d-na_F}{4a_F})
$$
so that
$$
\frac{n}{4d(n-1)} \leq \delta \leq \frac{1}{2}.
$$

We may suppose, without loss of generality, that $\mathcal{H}(F)=\mathcal{H}_0(F).$ If $\textbf{a}$ is an integer point with $\left\Vert \textbf a \right\Vert > C$ for which $1 \leq |F(\textbf{a})| \leq m$ then by Lemma \ref{lem1} there is an $n$-tuple $(\textbf{L}_{i_1}, ... ,\textbf{L}_{i_n})$ in $J(F)$ such that
$$
\frac{\prod^{n}_{j=1}|L_{i_j}(\textbf{a})|}{|det(\textbf{L}_{i_1}^{tr}, ... ,\textbf{L}_{i_n}^{tr})|} < C_1\bigg (\frac{m}{\left\Vert \textbf a \right\Vert ^{d-na_F}}\bigg )^{1/a_F}\mathcal{H}(F)^{c_F}.
$$
By \eqref{eq33}
$$
\left\Vert \textbf a \right\Vert ^{\frac{d-na_F}{4a_F}} \geq \left\Vert \textbf a \right\Vert ^{\frac{1}{4a_F(n-1)}} \geq max(C_1, m^{\frac{1}{a_F}}, m^{\frac{1}{d}}\mathcal{H}_0(F)^{1+c_F}).
$$
Thus
$$
\frac{\prod^{n}_{j=1}|L_{i_j}(\textbf{a})|}{|det(\textbf{L}_{i_1}^{tr}, ... ,\textbf{L}_{i_n}^{tr})|} < \frac{1}{\left\Vert \textbf a \right\Vert ^{\frac{d-na_F}{4a_F}}} \leq \frac {1}{\left\Vert \textbf a \right\Vert ^{\delta}}.
$$

We may write $\textbf{a} =g\textbf{a}^{'}$ with $\textbf{a}^{'}$ primitive. Since $1 \leq |F(\textbf{a})| \leq m$ and $|F(\textbf{a})| = g^d|F(\textbf{a}^{'})|$,
\begin{equation} \label{eq34}
g \leq m^{\frac{1}{d}}.
\end{equation}

Therefore
$$
\frac{\prod^{n}_{j=1}|L_{i_j}(\textbf{a}^{'})|}{|det(\textbf{L}_{i_1}^{tr}, ... ,\textbf{L}_{i_n}^{tr})|} <   \frac {1}{\left\Vert \textbf a^{'} \right\Vert ^{\delta}}.
$$
But $\left\Vert \textbf a \right\Vert > C$ hence by \eqref{eq17} and \eqref{eq34}
$$
\left\Vert \textbf a^{'} \right\Vert \geq \mathcal{H}(F)^{1+c_F} \geq \mathcal{H}(F).
$$
Further, by \eqref{eq30}, $\left\Vert \textbf a^{'} \right\Vert \geq H(L_{i_j})$ for $j = 1, ... ,n.$ Thus by Lemma \ref{lem3} $\left\Vert \textbf a^{'} \right\Vert $, hence also $\left\Vert \textbf a \right\Vert$, is in one of at most $|J(F)|c_1$ proper subspaces of $\mathbb{Q}^n$, where $c_1$ is the right hand side of inequality \eqref{eq31a} with $d$ in place of $D$.

\end{proof}

\section{Bounds for integer points}

We require an estimate for the number of integer points $\textbf{a}$ for which an expression of the form
$$
\frac{\prod^{n}_{j=1}|L_{i_j}(\textbf{a})|}{|det(\textbf{L}_{i_1}^{tr}, ... ,\textbf{L}_{i_n}^{tr})|} 
$$
is small.

\begin{lem} \label{lem5}
Let $K_1(\textbf{X}), ... ,K_n(\textbf{X})$ be $n$ linearly independent linear forms in $\mathbb{C}[\textbf{X}]$. Let $A$ and $C$ be positive real numbers and let $U$ be the set of $\textbf{x}$ in $\mathbb{R}^n$ with
$$
\frac{\prod^{n}_{i=1}|K_{i}(\textbf{x})|}{|det(\textbf{K}_{1}^{tr}, ... ,\textbf{K}_{n}^{tr})|} < A
$$
and $\left\Vert \textbf x \right\Vert \leq C$. Let $D$ be a real number with $D > 1$. The set of integer points in $U$ lie in a set $W$ or in at most $\kappa$ proper subspaces where if $C^n \geq n!AD^n$ then
$$
 |W|\ll_n \bigg(\frac{\log (\frac{C^n}{n!A})}{\log D}\bigg)^{n-1}AD^{n+1}
$$
and
$$
\kappa \ll_n  \bigg(\frac{\log (\frac{C^n}{n!A})}{\log D}\bigg)^{n-1}
$$
and where otherwise
$$
|W| \ll_n AD^{n+1}
$$
and 
$$
\kappa  \ll_n 1.
$$

\end{lem}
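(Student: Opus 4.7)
My plan is a dyadic decomposition of integer points in $U$ by the magnitudes $|K_i(\textbf{a})|$, followed by Minkowski's second theorem on successive minima in each dyadic cell. Since the ratio $\prod|K_i|/|\det(\textbf{K}_1^{tr},\ldots,\textbf{K}_n^{tr})|$ is invariant under rescaling each $K_i$ by a nonzero scalar, I first normalize so that $\|K_i\| = 1$ for every $i$; this gives $|K_i(\textbf{x})| \leq \|\textbf{x}\|$ (Cauchy--Schwarz) and $|\det M| \leq 1$ (Hadamard), where $M$ is the coefficient matrix of the $K_i$. For each $\textbf{e} = (e_1,\ldots,e_n) \in \mathbb{Z}^n$, introduce the symmetric parallelepiped $P_\textbf{e} = \{\textbf{x} \in \mathbb{R}^n : |K_i(\textbf{x})| \leq D^{e_i},\ i=1,\ldots,n\}$, of volume $2^n D^{\sum e_i}/|\det M|$. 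Every integer point $\textbf{a} \in U$ with all $K_i(\textbf{a}) \neq 0$ falls in a unique dyadic cell $Q_\textbf{e} = \{\textbf{x} : D^{e_i - 1} < |K_i(\textbf{x})| \leq D^{e_i}\}$ indexed by $\textbf{e}(\textbf{a})$, and the conditions $\textbf{a} \in U$ together with $\|\textbf{a}\| \leq C$ and $\|K_i\| = 1$ force $\sum_i e_i < n + \log_D(A|\det M|)$ and $e_i \leq 1 + \log_D C$. Any integer points of $U$ with some $K_i(\textbf{a}) = 0$ lie in one of the $n$ hyperplanes $\{K_i = 0\}$ and contribute at most $n$ subspaces to $\kappa$.

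For each relevant $\textbf{e}$, apply Minkowski's second theorem to $P_\textbf{e}$ and $\mathbb{Z}^n$: denoting the successive minima by $\mu_1^\textbf{e} \leq \cdots \leq \mu_n^\textbf{e}$, one has $\prod_i \mu_i^\textbf{e} \cdot \operatorname{vol}(P_\textbf{e}) \in [2^n/n!, 2^n]$, together with the standard lattice-point bound $|P_\textbf{e} \cap \mathbb{Z}^n| \ll_n \prod_i \max(1, 1/\mu_i^\textbf{e})$. If $\mu_n^\textbf{e} > 1$, all integer points of $P_\textbf{e}$ lie in a proper linear subspace spanned by the vectors achieving minima $\leq 1$, and I add this subspace to $\kappa$. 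If $\mu_n^\textbf{e} \leq 1$ (the ``full-dimensional'' case), then $|P_\textbf{e} \cap \mathbb{Z}^n| \ll_n \operatorname{vol}(P_\textbf{e}) = 2^n D^{\sum e_i}/|\det M|$, and the inequality $\prod \mu_i^\textbf{e} \leq 1$ forces $\operatorname{vol}(P_\textbf{e}) \geq 2^n/n!$, so $\sum e_i \geq \log_D(|\det M|/n!)$. I place the integer points arising from the full-dimensional cells into $W$ and the others into their associated proper subspaces.

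Finally, I bound $|W|$ and $\kappa$ by summing, respectively counting, over admissible $\textbf{e}$. After the change of variables $f_i = (1+\log_D C) - e_i \geq 0$, the full-dimensional cells correspond to non-negative integer $n$-tuples $(f_i)$ whose sum $\sum f_i$ lies in an interval whose smallest value is approximately $\log_D(C^n/(A|\det M|))$. In the regime $C^n \geq n!AD^n$, the standard count of such tuples gives $\ll_n (\log(C^n/(n!A))/\log D)^{n-1}$ admissible $\textbf{e}$, yielding the bound on $\kappa$. The weighted sum $\sum D^{\sum e_i}$ is a polynomial-times-geometric sum dominated by its top term, producing
\[
|W| \ll_n AD^{n+1} \left(\frac{\log(C^n/(n!A))}{\log D}\right)^{n-1};
\]
the extra factor of $D$ relative to the maximal-volume estimate $AD^n$ arises from the $D/(D-1)$ envelope of the geometric component. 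In the small regime $C^n < n!AD^n$, the trivial bound $|U \cap \mathbb{Z}^n| \leq |\{\textbf{a} \in \mathbb{Z}^n : \|\textbf{a}\| \leq C\}| \ll_n C^n \leq n!AD^n \ll_n AD^{n+1}$ with $\kappa \leq n$ suffices. The main technical obstacle will be the careful bookkeeping of the weighted lattice-point count in the slab against the simultaneous constraints on $\sum e_i$ and the individual $e_i$, and verifying that the case-(a) cells (where $\mu_n^\textbf{e} > 1$) are captured by the same count of admissible $\textbf{e}$ so that $\kappa$ indeed fits within the claimed $(n-1)$-th power.
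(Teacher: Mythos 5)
The paper does not actually prove this lemma; it is cited from Lemma 7 of Thunder's \emph{J.\ Number Theory} 2003 paper together with Lemma 9 of his \emph{Annals} 2001 paper, so there is nothing internal to compare against. You are therefore attempting to reconstruct Thunder's argument from scratch. Your framework---dyadic decomposition of integer points by the sizes $|K_i(\textbf{a})|$, applying Minkowski's second theorem to the symmetric parallelepiped $P_{\textbf{e}}$ in each cell, and then splitting into ``full-dimensional'' cells (contributing to $W$) versus cells whose lattice points fall into a proper subspace (contributing to $\kappa$)---is indeed the standard approach and is correct in spirit.

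However, your cell count contains a genuine gap, and it is not merely a matter of bookkeeping. After normalizing $\|K_i\|=1$, the constraints you derive on an admissible $\textbf{e}$ are $e_i \leq E_0 := 1+\log_D C$ and (for the full-dimensional cells, via Minkowski) $\log_D(|\det M|/n!) \leq \sum_i e_i < n + \log_D(A|\det M|)$. Passing to $f_i = E_0 - e_i \geq 0$, the sum $\sum f_i$ is therefore forced into an interval with \emph{lower} endpoint $\log_D\!\big(C^n/(A|\det M|)\big)$ and \emph{upper} endpoint $n + \log_D\!\big(n!\,C^n/|\det M|\big)$. Two problems follow. First, the interval has width $\approx n + \log_D(n!A)$, so the tuple count is a sum over many values of $\sum f_i$, not the single-value count $\binom{s+n-1}{n-1}$ your sketch gestures at; already for $n=2$ with $|\det M|=1$ this gives $\asymp \log_D C\cdot\log_D A$ cells, which is much larger than the claimed $\log_D(C^2/(n!A))$ whenever $A$ is large. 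Second, both endpoints contain $|\det M|$, which the normalization $\|K_i\|=1$ only bounds from above (by Hadamard), not below; since the stated bounds for $\kappa$ and $|W|$ depend only on $A,C,D,n$, the $|\det M|$-dependence must disappear, and nothing in your argument explains how. The missing ingredient is a more careful exploitation of the constraint $\|\textbf{x}\|\leq C$: when some $e_i$ is very negative the corresponding parallelepiped $P_{\textbf{e}}$ is highly eccentric and its intersection with the ball of radius $C$ (rather than $P_{\textbf{e}}$ itself) governs both the lattice-point count and the successive minima; this is what Thunder's Lemma~7 of \cite{Th2} encodes, and it cannot be skipped. A secondary but real issue is your claim that the extra $D$ in $D^{n+1}$ ``arises from the $D/(D-1)$ envelope'': since $D/(D-1) > D$ for $1 < D < 2$ this does not give the stated bound for all $D>1$, so either a different source of the factor $D$ must be identified or an argument that the range $1<D<2$ is harmless must be supplied.
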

\begin{proof}
This follows from Lemma 7 of \cite{Th2} and Lemma 9 of \cite{Th1}.
\end{proof}

\begin{lem} \label{lem6}
Let $K_1(\textbf{X}), ... ,K_n(\textbf{X})$ be $n$ linearly independent linear forms in $\mathbb{C}[\textbf{X}]$. Let $A$ and $B$ be positive real numbers and let $U$ be the set of $\textbf{x}$ in $\mathbb{R}^n$ with
$$
\frac{\prod^{n}_{i=1}|K_{i}(\textbf{x})|}{|det(\textbf{K}_{1}^{tr}, ... ,\textbf{K}_{n}^{tr})|} < A
$$
and
$$
B \leq \left\Vert \textbf x \right\Vert \leq 2B.
$$
 The set of integer points in $U$ lie in a set $W$ or in at most $\kappa$ proper subspaces where if $B^n \geq n^{\frac{n}{2}}n!A$ then
$$
 |W|\ll_n \bigg(\log (\frac{2^nB^n}{n^{\frac{n}{2}}n!A})\bigg)^{n-2}A
 $$
and
$$
\kappa \ll_n   \bigg(\log (\frac{2^nB^n}{n^{\frac{n}{2}}n!A})\bigg)^{n-2}
$$
and where otherwise
$$
|W| \ll_n A
$$
and 
$$
\kappa  \ll_n 1.
$$

\end{lem}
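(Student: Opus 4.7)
The plan is to adapt the method used to establish Lemma \ref{lem5} (namely Lemma 9 of \cite{Th1} together with Lemma 7 of \cite{Th2}) to the dyadic annulus $B \le \left\Vert \textbf{x} \right\Vert \le 2B$. The saving of one logarithmic factor, as well as the disappearance of the $D^{n+1}$ term present in Lemma \ref{lem5}, comes from the fact that restricting $\left\Vert \textbf{x} \right\Vert$ to a single dyadic scale eliminates one dyadic degree of freedom in the natural decomposition of $\{\textbf{x} \in \mathbb{R}^n : \prod_i |K_i(\textbf{x})| < A\,|det(\textbf{K}_1^{tr}, \ldots, \textbf{K}_n^{tr})|\}$ by the level sets of the $|K_i(\textbf{x})|$.

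After rescaling the forms so that $|det(\textbf{K}_1^{tr}, \ldots, \textbf{K}_n^{tr})| = 1$ (which leaves the ratio in the definition of $U$ unchanged), I would partition the integer points of $U$ according to the dyadic parallelepipeds
$$
\Pi(\textbf{k}) = \{\textbf{x} \in \mathbb{R}^n : 2^{k_i-1} < |K_i(\textbf{x})| \le 2^{k_i}\}, \qquad \textbf{k} = (k_1, \ldots, k_n) \in \mathbb{Z}^n.
$$
The product condition forces $\sum_i k_i \le n + \log_2 A$, and the two-sided norm constraint, combined with the linear independence of the $K_i$ (which after normalisation yields a bi-Lipschitz comparison of $\left\Vert \textbf{x} \right\Vert$ with $\max_i |K_i(\textbf{x})|$ whose constants depend only on $n$), pins $\max_i k_i$ to an interval of $O_n(1)$ length around $\log_2 B$. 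These two conditions leave $n-2$ free dyadic coordinates, so that the number of admissible tuples $\textbf{k}$ is $\ll_n (\log(2^nB^n/(n^{n/2}n!A)))^{n-2}$ in the regime $B^n \ge n^{n/2}n!A$, and $\ll_n 1$ otherwise.

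For each admissible $\textbf{k}$ I would apply Minkowski's second theorem to the symmetric parallelepiped $\widetilde{\Pi}(\textbf{k}) = \{\textbf{x} : |K_i(\textbf{x})| \le 2^{k_i}\}$ with respect to $\mathbb{Z}^n$, whose volume is $\asymp_n \prod_i 2^{k_i} \ll_n A$. Let $\lambda_1 \le \cdots \le \lambda_n$ be its successive minima. If $\lambda_n > 1$ then the integer points of $\Pi(\textbf{k})$ lie in the single proper rational subspace spanned by vectors realising $\lambda_1, \ldots, \lambda_{n-1}$, contributing one unit to $\kappa$. If $\lambda_n \le 1$, then the standard counting estimate gives $|\widetilde{\Pi}(\textbf{k}) \cap \mathbb{Z}^n| \ll_n \prod_i \lambda_i^{-1} \ll_n \operatorname{vol}(\widetilde{\Pi}(\textbf{k})) \ll_n A$, and these points are placed in $W$.

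Summing the per-box contributions over the $\ll_n (\log(\cdots))^{n-2}$ admissible tuples then yields the stated bounds on $|W|$ and $\kappa$, while in the complementary regime $B^n < n^{n/2}n!A$ only $O_n(1)$ boxes are admissible and the analysis collapses to the trivial estimates $|W| \ll_n A$ and $\kappa \ll_n 1$. The main obstacle will be the dyadic counting: the upper bound $\max_i k_i \le \log_2(2B) + O(1)$ is immediate from $|K_i(\textbf{x})| \le \left\Vert \textbf{K}_i \right\Vert \left\Vert \textbf{x} \right\Vert$, but the matching lower bound, together with the extraction of the sharp threshold $B^n \ge n^{n/2}n!A$, requires uniform control of the dual basis to $(\textbf{K}_i)$ after the determinant normalisation, and this is where the gain of one logarithmic factor over Lemma \ref{lem5} must be extracted.
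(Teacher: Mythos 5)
The paper does not actually reprove this lemma: it quotes Lemma 6 of \cite{Th2} (with $C=2B$ and $D=2$) for the covering of the annulus by parallelepipeds, and Lemma 9 of \cite{Th1} for the count inside each parallelepiped. Your overall architecture — cover $U$ by boxes $\{|K_i(\mathbf{x})|\le c_i\}$ of volume $\ll_n A$ and apply Minkowski's second theorem to each — is exactly the strategy behind those two lemmas, and your treatment of a single box (the dichotomy $\lambda_n>1$ versus $\lambda_n\le 1$) is correct and is precisely Lemma 9 of \cite{Th1}. The gaps are in the box-counting, which is where the whole content of the lemma lies.

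First, a genuine partition into cells $2^{k_i-1}<|K_i(\mathbf{x})|\le 2^{k_i}$ does not yield finitely many admissible $\mathbf{k}$: the product condition bounds $\sum_i k_i$ only from above, and an integer point can have $|K_i(\mathbf{x})|$ arbitrarily small (or equal to $0$), so even after pinning $\max_i k_i$ the remaining indices range over an infinite set. The standard repair is to cover rather than partition, inflating each cell to a box whose side-product is pinned to $\asymp_n A\,|\det(\mathbf{K}_1^{tr},\ldots,\mathbf{K}_n^{tr})|$ from \emph{both} sides; only then does ``sum fixed, each coordinate bounded above'' give $\ll L^{n-1}$ boxes, and only then can one extra pinned coordinate reduce this to $\ll L^{n-2}$. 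Second, and more seriously, the claim that normalising $|\det(\mathbf{K}_1^{tr},\ldots,\mathbf{K}_n^{tr})|=1$ makes $\max_i|K_i(\mathbf{x})|$ comparable to $\left\Vert \mathbf{x}\right\Vert$ with constants depending only on $n$ is false: for $K_1=X_1$ and $K_2=NX_1+X_2$ the determinant is $1$, yet at $\mathbf{x}=(1,-N)$ one has $\max_i|K_i(\mathbf{x})|=1$ while $\left\Vert \mathbf{x}\right\Vert\asymp N$. Since the ratio defining $U$ is invariant under rescaling each form individually, the hypotheses give no control on the dual basis, and this comparison is precisely the step that is supposed to produce the exponent $n-2$ and the threshold $B^n\ge n^{\frac{n}{2}}n!A$ (those constants point to the normalisation $\left\Vert \mathbf{K}_i\right\Vert=1$ plus Hadamard's inequality, not to $|\det|=1$). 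You flag this as ``the main obstacle'', but flagging it is not overcoming it: as written the proposal does not establish the saving of one logarithm over Lemma \ref{lem5}, which is the entire point of the statement. A further small issue is that for complex forms the volume of $\{|K_i(\mathbf{x})|\le c_i\}\cap\mathbb{R}^n$ is $\asymp_n \prod_i c_i/|\det|$ only when the forms are closed under conjugation and conjugate forms receive equal $c_i$, a point your volume computation glosses over. The clean route is the one the paper takes, namely citing Lemma 6 of \cite{Th2}, where the annulus covering is constructed so as to avoid all of these difficulties.
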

\begin{proof}
This follows from Lemma 6 of \cite{Th2} with $C=2B$ and $D=2$  and Lemma 9 of \cite{Th1}.
\end{proof}

\section{Powers of a definite quadratic forms in $2$ variables}

Suppose $F$ is proportional to a power of a definite quadratic form in $2$ variables, say
\begin{equation} \label{eq35}
F(X_1,X_2) = h(AX_1^2 + BX_1X_2 + CX_2^2)^k
\end{equation}
with $h,k,A,B,C$ integers with $h\neq 0, k\geq 2$ and $B^2-4AC < 0.$ Then $F$ is of essentially finite type but $a_F=k=d/2$ where $d$ denotes the degree of $F$ and so \eqref{eq15} does not hold. As a consequence we must estimate $N^*_F(m)$ directly.

We proceed by first estimating $N^*_G(m)$ where $G(X_1,X_2) = AX_1^2 + BX_1X_2 + CX_2^2$. In 1915 Landau \cite{L} gave an asymptotic estimate for $N_G(m)$, hence also for $N^*_G(m)$, however he did not make explicit the dependence on the coefficients of $G$ in his estimate, a feature that we require.

\begin{lem} \label{lem7}
Let $F$ be a form as in \eqref{eq35}. Then
\begin{equation} \label{eq36}
|N^*_F(m)-\frac{2\pi}{\sqrt{4AC-B^2}}(\frac{m}{h})^{2/d}| \ll(\frac{m}{h})^{1/d}.
\end{equation}

\end{lem}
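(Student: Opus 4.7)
My plan is to reduce the count $N^*_F(m)$ to a classical lattice point count inside a positive definite ellipse, and then use Gauss reduction to force the implicit constant to be absolute. After replacing $(h,A,B,C)$ by $(-h,-A,-B,-C)$ if necessary, I may assume $A>0$, so that $G(X_1,X_2):=AX_1^2+BX_1X_2+CX_2^2$ is positive definite and vanishes only at the origin; assuming likewise $h>0$, the inequality $0<|F(\textbf{x})|\le m$ becomes $0<G(\textbf{x})\le M$ with $M=(m/h)^{1/k}$. Thus $N^*_F(m)$ is the number of non-zero integer points inside the ellipse $\{G\le M\}$, which has area $2\pi M/\sqrt{4AC-B^2}$.

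Next I would exploit the fact that replacing $G$ by $G\circ T$ with $T\in\operatorname{GL}_2(\mathbb{Z})$ preserves both the integer point count and the discriminant $|\Delta|=4AC-B^2$. So I may take $G$ to be Gauss-reduced, meaning $|B|\le A\le C$ with $A$ a positive integer. In that case
$$|\Delta|=4AC-B^2\ge 4A^2-A^2=3A^2,$$
which is the arithmetic inequality that will make the eventual error bound absolute.

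To count the integer points I would slice along the $x_2$-axis. Completing the square gives $G(x_1,x_2)=A(x_1+Bx_2/(2A))^2+(|\Delta|/(4A))x_2^2$, so for each integer $x_2$ with $x_2^2\le 4AM/|\Delta|$ the number of integers $x_1$ with $G(x_1,x_2)\le M$ equals $2\sqrt{(M-(|\Delta|/(4A))x_2^2)/A}+O(1)$. Summed over the $\ll \sqrt{AM/|\Delta|}$ valid $x_2$, the $O(1)$ errors contribute $\ll \sqrt{M/A}\ll \sqrt{M}$, using $|\Delta|\ge 3A^2$ and $A\ge 1$. The main sum is a Riemann sum for $(2/\sqrt{A})\int_{-2\sqrt{AM/|\Delta|}}^{2\sqrt{AM/|\Delta|}}\sqrt{M-(|\Delta|/(4A))t^2}\,dt$, which by the substitution $t=2\sqrt{AM/|\Delta|}\,u$ evaluates to $2\pi M/\sqrt{|\Delta|}$; the sum-to-integral error is controlled by the peak value $2\sqrt{M/A}$, hence again $\ll \sqrt{M}$. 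Subtracting one for the origin and using $M^{1/2}=(m/h)^{1/d}$ yields the claim.

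The only delicate point is that every error in the argument must have an absolute implicit constant, independent of $A$, $B$, $C$; this is exactly what Gauss reduction provides through the bound $A\ge 1$ (and the subsidiary $|\Delta|\ge 3A^2$). Without it, a highly elongated ellipse could produce an error much larger than $\sqrt{M}$, so this step is the key technical content of the proof.
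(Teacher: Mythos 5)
Your proof is correct and follows essentially the same route as the paper: reduce $F$ to the underlying positive definite quadratic form $G$ with $N^*_F(m)=N^*_G((m/h)^{1/k})$, pass to a reduced representative under $\operatorname{SL}_2(\mathbb{Z})$ so that the reduction inequalities ($|B|\le A\le C$, hence $4AC-B^2\ge 3A^2$ and $A\ge 1$) make every implicit constant absolute, and compare the lattice-point count with the area $2\pi q/\sqrt{4AC-B^2}$ of the ellipse with an error $O(q^{1/2})$. The only difference is that you execute the counting step by explicit slicing and completing the square, whereas the paper invokes Davenport's area-plus-perimeter principle and bounds the perimeter of the reduced ellipse; both give the same error term.
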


\begin{proof}
We first remark that
\begin{equation} \label{eq37}
N^*_F(m)=N^*_G((\frac{m}{h})^{1/k})
\end{equation}
and, since $G$ is definite, that for any positive real number $q$
\begin{equation} \label{eq38}
N_G(q) = 1 + N^*_G(q).
\end{equation}
We may suppose that $G$ is positive definite. The argument when $G$ is negative definite is similar.

We note that
\begin{equation} \label{eq39}
N_{G}(q)=N_{\tilde G}(q)
\end{equation}
whenever $\tilde G(X_1,X_2) = G(aX_1+bX_2, cX_1+dX_2)$ where $(\begin{smallmatrix}a & b \\ c & d \end{smallmatrix})$ is in $\operatorname{SL}_2(\mathbb{Z})$. By choosing $(\begin{smallmatrix}a & b \\ c & d \end{smallmatrix})$ appropriately we may suppose that $\tilde G$ is in reduced form. In particular
$$
\tilde G(X_1,X_2) = rX_1^2 + sX_1X_2 + tX_2^2
$$
with $|s| \leq r \leq t.$ We also have
\begin{equation} \label{eq40}
s^2-4rt = B^2-4AC
\end{equation}
since $(\begin{smallmatrix}a & b \\ c & d \end{smallmatrix})$ is in $\operatorname{SL}_2(\mathbb{Z})$.

The region $\{(x_1,x_2) \in \mathbb R^2 : |\tilde G(x_1,x_2)| \leq q \}$ is closed and bounded with perimeter of length $P$. Thus, see \cite{Daven},
\begin{equation} \label{eq41}
|N_{\tilde G}(q) - V_{\tilde G}q| < 1+ 4(P+1),
\end{equation}
and, by \eqref{eq40},
\begin{equation} \label{eq42}
V_{\tilde G} = \frac{2\pi}{\sqrt {4rt-s^2}} = \frac{2\pi}{\sqrt {4AC-B^2}} .
\end{equation}
The length of the perimeter of the ellipse given by $\tilde G = q$ is at most $2\pi w$ where 
$$
w=\big ( \frac{2(r+t+\sqrt{(r-t)^2 +s^2})}{4rt-s^2}\big )^{1/2}q^{1/2}.
$$
Since $|s| \leq r \leq t$ we have $4rt-s^2 \geq 3rt$ and so $w \ll q^{1/2}.$ Thus by \eqref{eq41} and \eqref{eq42}
$$
|N_{\tilde G}(q) -  \frac{2\pi}{\sqrt {4AC-B^2}}q| \ll q^{1/2}
$$
and so by \eqref{eq38} and \eqref{eq39}
$$
|N_{G}^{*}(q) -  \frac{2\pi}{\sqrt {4AC-B^2}}q| \ll q^{1/2}
$$
hence, since $k=d/2$, the result follows from \eqref{eq37}.

\end{proof}

\section{Small solutions}

Let $m$ and $B$ be real numbers with $m \geq 1$ and $B \geq 1$. Denote by $Vol(m,B)$ the volume of the set
$$
\{(x_1,...,x_n) \in \mathbb R^n : |F(x_1, ... ,x_n)| \leq m \text {  and  } \left\Vert \textbf x \right\Vert \leq B \}.
$$

\begin{lem} \label{lem8}
Let $m$ and $B$ be real numbers with $m \geq 1$ and $B \geq 1$. If $F$, as in \eqref{eq1}, is of essentially finite type and $F$ is not proportional to a power of a definite quadratic form in $2$ variables then
$$
|V_Fm^{n/d} - Vol(m,B)| \ll_{n,d} \mathcal{H}(F)^{c_F}m^{1/a_F}B^{(na_F-d)/a_F}(1 + \log B)^{n-2}.
$$
\end{lem}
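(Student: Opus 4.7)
The plan is to interpret the difference as a \emph{tail} volume and then dyadically decompose the tail into shells, applying Lemma \ref{lem1} on each shell. By homogeneity, $V_F m^{n/d}$ is the volume of $\{\mathbf{x}\in\mathbb{R}^n:|F(\mathbf{x})|\leq m\}$, hence
$$
V_F m^{n/d} - \text{Vol}(m,B) = \text{Vol}\bigl(\{\mathbf{x}\in\mathbb{R}^n : |F(\mathbf{x})|\leq m,\ \|\mathbf{x}\|>B\}\bigr).
$$
Partition $\{\|\mathbf{x}\|>B\}$ into dyadic annuli $\mathcal{A}_j = \{\mathbf{x}:2^jB\leq\|\mathbf{x}\|<2^{j+1}B\}$ for $j=0,1,2,\ldots$

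On each annulus, apply Lemma \ref{lem1}: every $\mathbf{x}\in\mathcal{A}_j$ with $|F(\mathbf{x})|\leq m$ satisfies, for some $n$-tuple in $J(F)$,
$$
\frac{\prod_{l=1}^n |L_{i_l}(\mathbf{x})|}{|\det(\mathbf{L}_{i_1}^{tr},\ldots,\mathbf{L}_{i_n}^{tr})|} < A_j,\qquad A_j := C_1\,\mathcal{H}(F)^{c_F}\!\left(\tfrac{m}{(2^jB)^{d-na_F}}\right)^{1/a_F}.
$$
Since $|J(F)|\leq\binom{d}{n}$, the set $\mathcal{A}_j\cap\{|F(\mathbf{x})|\leq m\}$ is covered by $O_{n,d}(1)$ sets of the form $S_j = \{\mathbf{x}\in\mathcal{A}_j:\prod|L_{i_l}(\mathbf{x})|/|\det|<A_j\}$.

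Each such $S_j$ is bounded by the volume analogue of Lemma \ref{lem6}: after changing variables $\mathbf{y}=M\mathbf{x}$ with $M$ the matrix of rows $\mathbf{L}_{i_l}^{tr}$, the set transforms to $\{\mathbf{y}:\prod|y_l|<A_j|\det M|,\ \|M^{-1}\mathbf{y}\|\in[2^jB,2^{j+1}B]\}$, and the standard hyperbolic volume estimate on a shell (essentially Lemma 6 of \cite{Th2}, applied to volumes rather than lattice points) yields
$$
\text{Vol}(S_j) \ll_n A_j\,\bigl(1+\log(2^jB)\bigr)^{n-2}
$$
in the nontrivial regime, and an even smaller bound otherwise. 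Since by \eqref{eq33} we have $(d-na_F)/a_F>0$, the numbers $A_j$ decay geometrically in $j$, so
$$
\sum_{j\geq 0}\text{Vol}(S_j) \ll_{n,d} A_0\,(1+\log B)^{n-2} = C_1\,\mathcal{H}(F)^{c_F}\,m^{1/a_F}\,B^{(na_F-d)/a_F}(1+\log B)^{n-2},
$$
which is the desired bound.

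The main technical point is the volume analogue of Lemma \ref{lem6}. While that lemma counts integer points, the same successive-minima argument — replacing Minkowski's theorem on lattice points by the direct hyperbolic volume computation $\text{Vol}\{\mathbf{v}\in[0,1]^n:\prod v_l\leq c\} = c\sum_{k=0}^{n-1}(\log(1/c))^k/k!$ — yields the parallel volume statement, and indeed is exactly the content of Thunder's Lemma 6 in \cite{Th2}. The improvement from $(\log)^{n-1}$ (for a ball) to $(\log)^{n-2}$ (for a shell) is what allows the geometric sum over $j$ to preserve the $(1+\log B)^{n-2}$ factor rather than produce an extra log.
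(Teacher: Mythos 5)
Your argument is correct and is essentially the paper's: the paper proves Lemma \ref{lem8} by citing \eqref{eq15} together with the proof of Lemma 15 of \cite{Th1}, and that proof is exactly your dyadic-shell decomposition of the tail volume combined with Lemma \ref{lem1} and the hyperbolic volume estimate on a shell.
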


\begin{proof}
This follows from \eqref{eq15} and the proof of Lemma 15 of \cite{Th1}.

\end{proof}

Let $Vol^{'}(m,B)$ be the volume of the set
$$
\{(x_1,...,x_n) \in \mathbb R^n : |F(x_1, ... ,x_n)| \leq m \text {  and  } max(|x_1|, ... ,|x_n|)  \leq B \}
$$
and let $N_F^{'}(m,B)$ be the number of integer points $\textbf{a}$ for which $|F(\textbf{a}| \leq m$ and $ max(|x_1|, ... ,|x_n|)  \leq B$.

\begin{lem} \label{lem9}
Let $m$ and $B$ be real numbers with $m \geq 1$ and $B \geq 1$. If $F$, as in \eqref{eq1}, is of essentially finite type then
$$
|N^{'}_F(m,B)  - Vol^{'}(m,B)| \leq dn(2B+1)^{n-1}.$$
\end{lem}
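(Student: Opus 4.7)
The plan is to interpolate between $Vol'(m,B)$ and $N'_F(m,B)$ by replacing coordinate-wise integrations with summations one coordinate at a time. For $k = 0, 1, \ldots, n$, define
$$
N_k = \sum_{\substack{(a_1, \ldots, a_k) \in \mathbb{Z}^k \\ |a_i| \leq B}} \int_{[-B,B]^{n-k}} \mathbf{1}\bigl[|F(a_1, \ldots, a_k, x_{k+1}, \ldots, x_n)| \leq m\bigr] \, dx_{k+1} \cdots dx_n,
$$
so that $N_0 = Vol'(m,B)$ and $N_n = N'_F(m,B)$. By the triangle inequality it then suffices to prove $|N_{k+1} - N_k| \leq d(2B+1)^{n-1}$ for each $k \in \{0, 1, \ldots, n-1\}$ and sum.

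The key one-dimensional estimate is the following: for fixed integers $a_1, \ldots, a_k$ with $|a_i| \leq B$ and fixed real numbers $(x_{k+2}, \ldots, x_n) \in [-B,B]^{n-k-1}$, the function $t \mapsto F(a_1, \ldots, a_k, t, x_{k+2}, \ldots, x_n)$ is a polynomial in $t$ of degree at most $d$. Hence the set
$$
S = \{t \in [-B,B] : |F(a_1, \ldots, a_k, t, x_{k+2}, \ldots, x_n)| \leq m\}
$$
is either the full segment $[-B,B]$ (when the polynomial is constant in $t$ with modulus at most $m$) or a union of at most $d$ subintervals of $[-B,B]$, because $|F(t)| = m$ has at most $2d$ real solutions in $t$. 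In either case,
$$
\bigl|\,\#(\mathbb{Z} \cap S) - |S|\,\bigr| \leq d,
$$
since each subinterval contributes at most $1$ to the discrepancy between Lebesgue measure and integer count.

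The step $N_{k+1} - N_k$ amounts precisely to replacing $\int_{-B}^{B} \mathbf{1}[t \in S]\,dt$ by $\sum_{a_{k+1} \in \mathbb{Z},\,|a_{k+1}|\leq B} \mathbf{1}[a_{k+1} \in S]$ inside the outer sum over $(a_1,\ldots,a_k)$ and the outer integral over $(x_{k+2},\ldots,x_n)$. Summing the one-dimensional error (at most $d$) over at most $(2B+1)^k$ integer tuples and integrating against Lebesgue measure at most $(2B)^{n-k-1}$ on $[-B,B]^{n-k-1}$ gives
$$
|N_{k+1} - N_k| \leq d(2B+1)^k(2B)^{n-k-1} \leq d(2B+1)^{n-1},
$$
using $B \geq 1$. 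Telescoping over $k = 0, 1, \ldots, n-1$ yields the desired bound $dn(2B+1)^{n-1}$. The argument is elementary, does not invoke the essentially-finite-type hypothesis, and uses only that $F$ has degree $d$; the only minor point requiring care is the degenerate case where the univariate restriction of $F$ vanishes identically, which is subsumed by the $S = [-B,B]$ alternative above.
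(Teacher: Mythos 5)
Your proof is correct and is essentially the argument behind the reference the paper gives (Lemma 14 of Thunder's \emph{Decomposable form inequalities}): one replaces integration by summation one coordinate at a time, using that each univariate restriction of $F$ has degree at most $d$, so its sublevel set $\{t:|F(\dots,t,\dots)|\le m\}$ is a union of at most $d$ intervals (or all of $[-B,B]$ in the constant case), giving a discrepancy of at most $d$ per fiber. Your write-up is a complete, self-contained version of that cited proof, and the bookkeeping $d(2B+1)^k(2B)^{n-k-1}\le d(2B+1)^{n-1}$ followed by telescoping over the $n$ coordinates correctly yields $dn(2B+1)^{n-1}$.
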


\begin{proof}
This follows from  the proof of Lemma 14 of \cite{Th1}.

\end{proof}

\section{Large solutions}

In order to establish Theorems 1 and 2 we shall first analyze the set $Q(m)$ of integer points $\textbf{a}$ for which
$$
1 \leq |F(a_1, ... ,a_n)| \leq m
$$
and
$$
\left\Vert \textbf a \right\Vert \geq m^{1/(d-a_F)}.
$$

\begin{lem} \label{lem10}
Let F be a non-zero decomposable form in $n$ variables with integer coefficients and degree $d$ with $d > n \geq 2$ and suppose that $F$ is of essentially finite type and not proportional to a power of a definite quadratic form in $2$ variables. Let $m$ be a positive integer. $Q(m)$ is contained in a set of cardinality 
$$
\ll_{n,d} \mathcal{H}(F)^{c_F}(1 + \log m)^{n-2}m^{\frac{n-1}{d-a_F}}
$$
or in
$$
\ll_{n,d} (1+\log m + \log \mathcal{H}(F))^{n-1}
$$
proper subspaces.

\end{lem}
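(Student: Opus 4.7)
The plan is to split $Q(m)$ according to the size of $\|\textbf{a}\|$ and combine Lemma \ref{lem1}, Lemma \ref{lem4}, and Lemma \ref{lem6} via a dyadic decomposition.  First I would handle the very large solutions $\|\textbf{a}\| \geq C$, where $C$ is the constant from Lemma \ref{lem4} (assuming, as we may, that $\mathcal{H}(F)=\mathcal{H}_0(F)$).  By Lemma \ref{lem4} such points lie in $\ll_{n,d} 1$ proper rational subspaces, which is absorbed into the stated subspace bound.  It then remains to treat the intermediate range
$$
m^{1/(d-a_F)} \leq \|\textbf{a}\| < C.
$$
Using the explicit form of $C$ in \eqref{eq32}, the logarithmic length of this range is $\log(C/m^{1/(d-a_F)}) \ll_{n,d} 1 + \log m + \log \mathcal{H}(F)$, so the range splits into $J \ll_{n,d} 1+\log m + \log \mathcal{H}(F)$ dyadic shells $S_B = \{\textbf{x} : B \leq \|\textbf{x}\| < 2B\}$.

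On a shell $S_B$ I would apply Lemma \ref{lem1}: for every $\textbf{a} \in S_B \cap Q(m)$ there is an $n$-tuple $(\textbf{L}_{i_1},\dots,\textbf{L}_{i_n}) \in J(F)$ (at most $\binom{d}{n}$ possibilities) for which
$$
\frac{\prod_{j=1}^{n}|L_{i_j}(\textbf{a})|}{|\det(\textbf{L}_{i_1}^{tr},\dots,\textbf{L}_{i_n}^{tr})|} < C_1\left(\frac{m}{B^{d-na_F}}\right)^{1/a_F}\mathcal{H}(F)^{c_F} =: A_B,
$$
using $|F(\textbf{a})|\leq m$ and $\|\textbf{a}\|\geq B$.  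For each of the finitely many $n$-tuples I then apply Lemma \ref{lem6} with the form $A_B$ above, which either produces a set $W_B$ of size $\ll_n A_B\bigl(\log(2^n B^n / n^{n/2}n!A_B)\bigr)^{n-2}$ containing all such $\textbf{a}$ in that $n$-tuple class, or at most $\ll_n \bigl(\log(2^n B^n / n^{n/2}n!A_B)\bigr)^{n-2}$ proper subspaces.  Summing over the $\binom{d}{n}$ choices of $n$-tuple and over the $J$ dyadic shells gives, on the subspace side, a total of
$$
\ll_{n,d} J\cdot (1+\log m+\log \mathcal{H}(F))^{n-2} \ll_{n,d} (1+\log m+\log \mathcal{H}(F))^{n-1}
$$
proper rational subspaces, which matches the target.

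For the set-size side, I would exploit the geometric decay of $A_B$ in the dyadic index.  Writing $B_j = 2^j m^{1/(d-a_F)}$, one finds $A_{B_j} = A_{B_0}\cdot 2^{-j(d-na_F)/a_F}$ with $d-na_F>0$ by \eqref{eq15}, while the log factor $\log(B_j^n/A_{B_j})$ grows only linearly in $j$.  Hence
$$
\sum_{j\geq 0} A_{B_j}\bigl(\log(B_j^n/A_{B_j})\bigr)^{n-2} \ll_{n,d} A_{B_0}\bigl(\log(B_0^n/A_{B_0})\bigr)^{n-2},
$$
and a direct calculation gives $A_{B_0}\ll_{n,d} \mathcal{H}(F)^{c_F} m^{(n-1)/(d-a_F)}$ together with $\log(B_0^n/A_{B_0}) \ll_{n,d} 1 + \log m$ (the $\log \mathcal{H}(F)$ contributions cancel at the minimal shell because $B_0^n/A_{B_0} \asymp m^{1/(d-a_F)}/\mathcal{H}(F)^{c_F}$ after simplification; in the regime where this ratio is $\lesssim 1$ we fall into the "otherwise" case of Lemma \ref{lem6} and the contribution is simply $\ll_{n,d} A_{B_0}$, which is even smaller).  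This yields the claimed set-size bound $\ll_{n,d} \mathcal{H}(F)^{c_F}(1+\log m)^{n-2}m^{(n-1)/(d-a_F)}$.

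The main obstacle is the bookkeeping in the final step: verifying that the $\log m + \log \mathcal{H}(F)$ factors arising from $\log B$ in Lemma \ref{lem6} do not contaminate the set-size bound with an unwanted $\log \mathcal{H}(F)$.  This requires the observation that the geometric decay of $A_{B_j}$ dominates the polynomial-in-$j$ growth of the log factor, so that only the smallest dyadic shell contributes nontrivially, and at that scale the log factor is $O(1+\log m)$ up to terms that push us into the "otherwise" regime of Lemma \ref{lem6}.  Once this dyadic summation is handled, the union of the $W_{B_j}$ (for the set-size side) together with the subspaces produced by Lemma \ref{lem6} and Lemma \ref{lem4} (for the subspace side) covers $Q(m)$ with the advertised bounds.
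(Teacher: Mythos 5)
Your proposal is correct and follows essentially the same route as the paper: Lemma \ref{lem4} handles the range $\|\textbf{a}\|\geq C$, a dyadic decomposition of $m^{1/(d-a_F)}\leq\|\textbf{a}\|<C$ into $\ll 1+\log m+\log\mathcal{H}(F)$ shells is combined with Lemma \ref{lem1} and Lemma \ref{lem6}, and the geometric decay of the shell parameter $A_{B_j}$ is used to collapse the dyadic sum. The only cosmetic difference is that the paper makes the summation step explicit via $(1+\log m+j)\leq(1+\log m)j$ and $\sum_j j^{n-2}2^{-(j-1)/(a_F(n-1))}\ll_{n,d}1$, whereas you assert (correctly, modulo the ``otherwise'' regime you address) that only the smallest shell dominates.
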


\begin{proof}

By Lemma \ref{lem4} if $\textbf{a}$ is in $Q(m)$ with $\left\Vert \textbf a \right\Vert \geq C$ where $C$ is given by \eqref{eq32} then $\textbf{a}$ lies in one of
\begin{equation} \label{eq43a}
\ll_{n,d} 1
\end{equation}
proper rational subspaces.

It remains to consider the integer points $\textbf{a}$ in $Q(m)$ with
$$
m^{1/(d-a_F)} \leq \left\Vert \textbf a \right\Vert \leq C.
$$
and we do so by controling the solutions with
\begin{equation} \label{eq43}
2^{j-1}m^{1/(d-a_F)} \leq \left\Vert \textbf a \right\Vert \leq 2^{j}m^{1/(d-a_F)}
\end{equation}
for $j=1, ... ,t$ where $t$ satisfies
$$
2^{t-1} < C \leq 2^t.
$$
By \eqref{eq32},
\begin{equation} \label{eq44}
t \ll_{n,d} 1 + \log m + \log \mathcal{H}_0(F) .
\end{equation}

By Lemma \ref{lem1} and \eqref{eq15} , for each integer point $\textbf{a}$ for which \eqref{eq43} holds and $1 \leq |F(\textbf{a})| \leq m$ there is an $n$-tuple $(\textbf{L}_{i_1}, ... , \textbf{L}_{i_n})$ in $J(F)$ for which
$$
\frac{\prod^{n}_{j=1}|L_{i_j}(\textbf{a})|}{|det(\textbf{L}_{i_1}^{tr}, ... ,\textbf{L}_{i_n}^{tr})|} < \frac {C_1m^{\frac {n-1}{d-a_F}}\mathcal{H}(F)^{c_F}}{2^{\frac{j-1}{a_F(n-1)}}}.
$$
We may now apply Lemma \ref{lem6} with $A=\frac{C_1m^{\frac {n-1}{d-a_F}}\mathcal{H}(F)^{c_F}}{2^{\frac{j-1}{a_F(n-1)}}}$ and $B= 2^{j-1}m^{\frac {1}{d-a_F}}$. We find that such integer points $\textbf{a}$ lie in a set of cardinality 
\begin{equation} \label{eq45}
\ll_{n,d} (1 + \log m + j)^{n-2}\frac {m^{\frac {n-1}{d-a_F}}\mathcal{H}(F)^{c_F}}{2^{\frac{j-1}{a_F(n-1)}}}.
\end{equation}
or, since $j \leq t$ and \eqref{eq44} holds, in 
\begin{equation} \label{eq46}
\ll_{n,d} (1 + \log m + \log \mathcal{H}(F))^{n-2}
\end{equation}
proper subspaces.

Observe that, for $j \geq 3$,
\begin{equation} \label{eq47}
(1 + \log m + j) \leq (1+ \log m)j
\end{equation}
and that
\begin{equation} \label{eq48}
\sum_{j=1}^{t} \frac {j^{n-2}}{2^{\frac{j-1}{a_F(n-1)}}} \ll_{n,d}  1.
\end{equation}
Since $|J(F)| \leq n!\binom{d}{n}$ it follows from \eqref{eq43a}, \eqref{eq45}, \eqref{eq47}  and \eqref{eq48}, that the set of integer points $\textbf{a}$ with $m^{1/(d-a_F)} \leq \left\Vert \textbf a \right\Vert$ and $1 \leq |F(\textbf{a})| \leq m$ lies in a set of cardinality
$$
\ll_{n,d} (1+ \log m)^{n-2}m^{\frac {n-1}{d-a_F}}\mathcal{H}(F)^{c_F}
$$
or in
$$
\ll_{n,d} (1+ \log m + \log \mathcal{H}(F))^{n-1}
$$
proper subspaces as required.

\end{proof}

\section{Proof of Theorem 1}

If $F$ is proportional to the power of a definite quadratic form in $2$ variables then the result holds by Lemma \ref{lem7} since $|h| \geq 1, |4AC-B^2| \geq 1$ and $d_F=0$ and so we may assume this is not the case.

Since $N^*_G(m) = N^*_F(m)$ when $G$ is equivalent to $F$ we may assume, without loss of generality, that $\mathcal{H}(F) = \mathcal{H}_0(F)$.

We shall first prove that the integer points $\textbf{a}$ with $1 \leq |F(\textbf{a})| \leq m$ lie in a set of cardinality
\begin{equation} \label{eq49}
\ll_{n,d} m^{\frac{1}{d} + \frac{n-1}{d-d_F}}.
\end{equation}
or in at most
\begin{equation} \label{eq50}
\ll_{n,d} (1 + \log m)^{n-2}
\end{equation}
proper subspaces of $\mathbb{R}^n$.

To do so we consider two cases. In the first case
\begin{equation} \label{eq51}
\mathcal{H}(F)^{c_F} \geq m^{\frac{n}{2(d^2(n-1)^2 + d)}}/(1 + \log m)^{n-2},
\end{equation}
and in the second case
\begin{equation} \label{eq52}
\mathcal{H}(F)^{c_F} < m^{\frac{n}{2(d^2(n-1)^2 + d)}}/(1 + \log m)^{n-2}.
\end{equation}

Suppose initially that \eqref{eq51} holds. Let $\textbf{a}$ be an integer point with $1 \leq |F(\textbf{a})| \leq m$. By Lemma \ref{lem2} there is an $n$-tuple $(\textbf{L}_{i_1}, ... , \textbf{L}_{i_n})$ in $I(F)$ and a polynomial $G$ in $\mathbb{Z}[\textbf{X}]$ of degree $d_0 \geq d-d_F$ which divides $F$  for which
$$
\frac{\prod^{n}_{j=1}|L_{i_j}(\textbf{a})|}{|det(\textbf{L}_{i_1}^{tr}, ... ,\textbf{L}_{i_n}^{tr})|} < C_2\frac{|F(\textbf{a})|^{1/d}|G(\textbf{a})|^{\frac{n-1}{d_0}}}{\mathcal{H}(F)^{1/d}}.
$$
Since $G(\textbf{a})$ divides $F(\textbf{a})$ and $F(\textbf{a})$ is a non-zero integer of size at most $m$ we see that $|G(\textbf{a})| \leq m$ and so
\begin{equation} \label{eq53}
\frac{\prod^{n}_{j=1}|L_{i_j}(\textbf{a})|}{|det(\textbf{L}_{i_1}^{tr}, ... ,\textbf{L}_{i_n}^{tr})|} < C_2\frac{m^{\frac{1}{d} + \frac{n-1}{d_0}}}{\mathcal{H}(F)^{1/d}}.
\end{equation}

Thus each integer point $\textbf{a}$ with $1 \leq |F(\textbf{a})| \leq m$ lies in a set of $\textbf{x}$ in $\mathbb{R}^n$ with
$$
\frac{\prod^{n}_{j=1}|L_{i_j}(\textbf{x})|}{|det(\textbf{L}_{i_1}^{tr}, ... ,\textbf{L}_{i_n}^{tr})|} < C_2\frac{m^{\frac{1}{d} + \frac{n-1}{d_0}}}{\mathcal{H}(F)^{1/d}}.
$$
for some $n$-tuple $(\textbf{L}_{i_1}, ... , \textbf{L}_{i_n})$ from $I(F)$. We now apply Lemma \ref{lem5} with $A= C_2\frac{m^{\frac{1}{d} + \frac{n-1}{d_0}}}{\mathcal{H}(F)^{1/d}}, C$ given by \eqref{eq32} and $D= (2\mathcal{H}(F))^{\frac{1}{(n+1)d}}$ to conclude that the set of integer points $\textbf{a}$ with $\left\Vert \textbf a \right\Vert \leq C$ for which \eqref{eq53} holds lie in a set of cardinality
$$
\ll_{n,d} m^{\frac{1}{d} + \frac{n-1}{d_0}}
$$
or in
$$
\ll_{n,d} 1
$$
proper subspaces.

By Lemma \ref{lem4} the set of integer points $\textbf{a}$ with $1 \leq |F(\textbf{a})| \leq m$ for which $\left\Vert \textbf a \right\Vert > C$ lie in 
$$
\ll_{n,d} 1
$$
proper rational subspaces. Therefore, provided that \eqref{eq51} holds, the integer points $\textbf{a}$ with $1 \leq |F(\textbf{a})| \leq m$ either lie in a set of cardinality
\begin{equation} \label{eq55}
\ll_{n,d} m^{\frac{1}{d} +\frac {n-1}{d-d_F}}
\end{equation}
or in
\begin{equation} \label{eq56}
\ll_{n,d} 1
\end{equation}
proper subspaces.

We now suppose that we are in the second case, so that \eqref{eq52} holds. For any real number $x$ let $\lfloor x \rfloor$ denote the greatest integer less than or equal to $x$. Put
$$
r = \lfloor \mathcal{H}(F)^{2c_F(d^2(n-1)^2 + d)/n} \rfloor .
$$
For $x \geq 1$ we have $\lfloor x \rfloor \geq x/2$ hence
\begin{equation} \label{eq57}
r \geq  \mathcal{H}(F)^{2c_F(d^2(n-1)^2 + d)/n} /2 .
\end{equation}
since $\mathcal{H}(F) \geq 1$ by \eqref{eq30a}. Observe that
$$
r^{\frac{n}{2(d^2(n-1)^2 + d)}} \leq \mathcal{H}(F)^{c_F}
$$
and so \eqref{eq51} holds with $r$ in place of $m$. Thus, by \eqref{eq55} and \eqref{eq56}, the set of integer points $\textbf{a}$ with $1 \leq |F(\textbf{a})| \leq r$ either lie in a set of cardinality
$$
\ll_{n,d} r^{\frac{1}{d} +\frac {n-1}{d-d_F}}
$$
or in 
$$
\ll_{n,d} 1
$$
proper subspaces. Each proper subspace contains at most $(1+2r^{\frac {1}{d-a_F}})^{(n-1)}$ integer points $\textbf{a}$ for which $\left\Vert \textbf a \right\Vert \leq r^{\frac{1}{d-a_F}}$. By \eqref{eq15} $ \frac{n-1}{d-a_F} < \frac {n}{d}$ and so the number of integer points $\textbf{a}$ with $1 \leq |F(\textbf{a})| \leq r$ and $\left\Vert \textbf a \right\Vert \leq r^{\frac{1}{d-a_F}}$ is 
$$
\ll_{n,d} r^{\frac{1}{d} + \frac{n-1}{d-d_F}}
$$
hence the number $N_F(r, r^{\frac{1}{d-a_F}})$ of integer points $\textbf{a}$ with $|F(\textbf{a})| \leq r$ and $\left\Vert \textbf a \right\Vert \leq r^{\frac{1}{d-a_F}}$ is
$$
\ll_{n,d} r^{\frac{1}{d} +\frac {n-1}{d-d_F}}.
$$
Therefore the number $N^{'}_F(r, r^{\frac{1}{d-a_F}})$ of integer points $\textbf{a}$ with $|F(\textbf{a})| \leq r$ and
$$
max (|a_1|, ... ,|a_n|) \leq r^{\frac{1}{d-a_F}}
$$
is
$$
\ll_{n,d} r^{\frac{1}{d} +\frac {n-1}{d-d_F}}.
$$

By Lemma \ref{lem9} with $m=r$ and $B= r^{\frac{1}{d-a_F}}$ we find that
$$
|Vol^{'}(r, r^{\frac{1}{d-a_F}}) - N^{'}_F(r, r^{\frac{1}{d-a_F}})| \ll_{n,d} r^{\frac{n-1}{d-a_F}}
$$
hence
$$
Vol^{'}(r, r^{\frac{1}{d-a_F}}) \ll_{n,d} r^{\frac{1}{d} + \frac{n-1}{d-d_F}}.
$$
and so
\begin{equation} \label{eq58}
Vol(r, r^{\frac{1}{d-a_F}}) \ll_{n,d} r^{\frac{1}{d} + \frac{n-1}{d-d_F}}.
\end{equation}

By Lemma \ref{lem8}
$$
| V_Fr^{\frac{n}{d}} - Vol(r, r^{\frac{1}{d-a_F}})| \ll_{n,d} \mathcal{H}(F)^{c_F}r^{\frac{n-1}{d-a_F}}(1 + \log r)^{n-2}.
$$
By \eqref{eq15} 
\begin{equation} \label{eq59}
\frac{n}{d} - \frac{n-1}{d-a_F} \geq \frac{n}{d} - \frac{n-1}{d-\frac {d}{n} + \frac {1}{n(n-1)}} = \frac{n}{d^2(n-1)^2 +d}
\end{equation}
and so
$$
| V_Fr^{\frac{n}{d}} - Vol(r, r^{\frac{1}{d-a_F}})| \ll_{n,d} \frac {r^{\frac{n}{d}}\mathcal{H}(F)^{c_F}(1+ \log r)^{n-2}}{r^{\frac {n}{d^2(n-1)^2 + d}}}
$$
hence by \eqref{eq57}
\begin{equation} \label{eq60}
| V_Fr^{\frac{n}{d}} - Vol(r, r^{\frac{1}{d-a_F}})| \ll_{n,d} r^{\frac{n}{d}},
\end{equation}
and so by \eqref{eq58}
\begin{equation} \label{eq61}
V_Fr^{\frac{n}{d}}  \ll_{n,d} r^{\frac{1}{d} + \frac{n-1}{d-d_F}}.
\end{equation}

Since \eqref{eq52} holds, $m=rs$ with $s > 1$ and so
$$
V_Fm^{\frac{n}{d}} =V_Fr^{\frac{n}{d}}s^{\frac{n}{d}}.
$$
Thus by \eqref{eq61}
$$
V_Fm^{\frac{n}{d}} \ll_{n,d} r^{\frac{1}{d} + \frac{n-1}{d-d_F}}s^{\frac{n}{d}}
$$
hence
\begin{equation} \label{eq62}
V_Fm^{\frac{n}{d}}  \ll_{n,d} m^{\frac{1}{d} + \frac{n-1}{d-d_F}}.
\end{equation}

By Lemma \ref{lem8} with $B=m^{\frac{1}{d-a_F}}$
$$
| V_Fm^{\frac{n}{d}} - Vol(m, m^{\frac{1}{d-a_F}})| \ll_{n,d} \mathcal{H}(F)^{c_F}m^{\frac{n-1}{d-a_F}}(1 + \log m)^{n-2}
$$
and since $m > r$ we find, as in \eqref{eq60}, that
$$
| V_Fm^{\frac{n}{d}} - Vol(m, m^{\frac{1}{d-a_F}})| \ll_{n,d} m^{\frac {n}{d}}
$$
and so by \eqref{eq62}
$$
Vol(m, m^{\frac{1}{d-a_F}}) \ll_{n,d} m^{\frac{1}{d} + \frac{n-1}{d-d_F}}
$$
and
\begin{equation} \label{eq63}
Vol^{'}(m, m^{\frac{1}{d-a_F}}) \ll_{n,d} m^{\frac{1}{d} + \frac{n-1}{d-d_F}}.
\end{equation}
Since $\frac{n-1}{d-a_F} < \frac {n}{d}$,
by Lemma \ref{lem9} with $B=m^{\frac{1}{d-a_F}}$ we find that
$$
|Vol^{'}(m, m^{\frac{1}{d-a_F}}) - N^{'}_F(m, m^{\frac{1}{d-a_F}})| \ll_{n,d} m^{\frac {n}{d}}.
$$
Thus, by \eqref{eq63},
$$
N^{'}_F(m, m^{\frac{1}{d-a_F}}) \ll_{n,d} m^{\frac{1}{d} + \frac{n-1}{d-d_F}}
$$
hence
\begin{equation} \label{eq64}
N_F(m, m^{\frac{1}{d-a_F}}) \ll_{n,d} m^{\frac{1}{d} + \frac{n-1}{d-d_F}}.
\end{equation}

By Lemma \ref{lem10} the integer points $\textbf{a}$ with $1 \leq |F(\textbf{a})| \leq m$ for which $\left\Vert \textbf a \right\Vert \geq m^{1/(d-a_F)}$ lie in a set of cardinality
$$
\ll_{n,d} \mathcal{H}(F)^{c_F}m^{\frac{n-1}{d-a_F}}(1 + \log m)^{n-2}
$$
or in
$$
\ll_{n,d} (1 + \log m + \log \mathcal{H}(F))^{n-1}
$$
proper subspaces. Thus, by \eqref{eq52} and \eqref{eq59}, these points lie in a set of cardinality
$$
\ll_{n,d} m^{\frac{n}{d}}
$$
or in
$$
\ll_{n,d} (1 + \log m)^{n-1}
$$
proper subspaces. Together with \eqref{eq64} we conclude that when \eqref{eq52} holds the integer points $\textbf{a}$ with $1 \leq |F(\textbf{a})| \leq m$ lie in a set of cardinality
\begin{equation} \label{eq65}
\ll_{n,d} m^{\frac{1}{d} + \frac{n-1}{d-d_F}}
\end{equation}
or in
\begin{equation} \label{eq66}
\ll_{n,d} (1 + \log m)^{n-1}
\end{equation}
proper subspaces.

Therefore, by \eqref{eq55}, \eqref{eq56}, \eqref{eq65} and \eqref{eq66} the integer points $\textbf{a}$ with $1 \leq |F(\textbf{a})| \leq m$ lie in a set of cardinality
\begin{equation} \label{eq67}
\ll_{n,d} m^{\frac{1}{d} + \frac{n-1}{d-d_F}}
\end{equation}
or in 
\begin{equation} \label{eq68}
\ll_{n,d} (1 + \log m)^{n-1}
\end{equation}
proper subspaces. Notice that we may suppose that each proper subspace is defined over $\mathbb{Q}$ by replacing the subspace by the linear span of the integer points in the subspace.

If $T$ is a proper rational subspace of $\mathbb{R}^n$ which contains a point with integer coordinates $\textbf{a}$ with $1 \leq |F(\textbf{a})| \leq m$ then $T$ is not contained in $A_1, ... ,A_k$. Since $F$ is of essentially finite type then $V(F_{|T})$ is finite, where $F_{|T}$ denotes $F$ restricted to $T$, and $V(\tilde F_{|T})$ is finite whenever $\tilde F_{|T}$ is $F_{|T}$ restricted to a rational subspace of $T$ which is not a subspace of $A_i$ for $i=1, ... ,k$. Therefore $F_{|T}$ is of essentially finite type.

We shall now prove our result by induction on $n$. First suppose that $n=2$. Then a proper rational subspace $T$ is defined by an equation of the form $pX_1 + qX_2 = 0$ with $p$ and $q$ coprime integers. The integer points in $T$ have the form $(kq,-kp)$ with $k$ an integer. If $T$ contains a point $\textbf{a}$ with $1 \leq |F(\textbf{a})| \leq m$ then it contains a point $\textbf{a}$ with $F(\textbf{a}) \neq 0$ and so $F(q,-p) \neq 0$. But $F(kq,-kp) = k^nF(q,-p)$ and if $1 \leq |F(kq,-kp)| \leq m$ then $1\leq |k|^n \leq m$ hence each proper rational subspace contains at most $2m^{\frac{1}{d}}$ points $\textbf{a}$ with $1 \leq |F(\textbf{a})| \leq m$ . Thus the result holds for $n=2$ by \eqref{eq67} and \eqref{eq68}.

$F_{|T}$ is of essentially finite type for each proper rational subspace $T$ which contains a point $\textbf{a}$ with integer coordinates for which $1 \leq |F(\textbf{a})| \leq m$. Let $n^{'}$ be the dimension of $T$, so $n^{'} < n$. As on page 801 of \cite{Th1}, $F_{|T}$ may be viewed as a form $G$, which is essentially finite, on $\mathbb{R}^{n^{'}}$. Thus, since $n^{'} \leq n-1$, by our inductive hypothesis each proper rational subspace of $\mathbb{R}^n$ which contains an integer point $\textbf{a}$ with $1 \leq |F(\textbf{a})| \leq m$ contains
$$
\ll_{n,d} m^{\frac{1}{d} + \frac{n-2}{d-d_F}}
$$
such points. Therefore, by \eqref{eq67} and \eqref{eq68}, the number of integer points $\textbf{a}$ with $1 \leq |F(\textbf{a})| \leq m$ is
$$
\ll_{n,d} m^{\frac{1}{d} + \frac{n-1}{d-d_F}} + (1+ \log m)^{n-1}m^{\frac{1}{d} + \frac{n-2}{d-d_F}}
$$
which is
$$
\ll_{n,d} m^{\frac{1}{d} + \frac{n-1}{d-d_F}} 
$$
as required.

\section{Proof of Theorem 2}

If $F$ is proportional to a power of a definite quadratic form the result holds by Lemma \ref{lem7} so we may assume this is not the case.

Let $N^{*}_F(m, m^{\frac{1}{d-a_F}})$ denote the number of integer points $\textbf{a}$ with $1 \leq |F(\textbf{a})| \leq m$ and $\left\Vert \textbf a \right\Vert \leq m^{1/(d-a_F)}$ and let $\tilde N^{*}_F(m, m^{\frac{1}{d-a_F}})$ denote the number of integer points $\textbf{a}$ with $1 \leq |F(\textbf{a})| \leq m$ and $\left\Vert \textbf a \right\Vert > m^{1/(d-a_F)}$. Then
$$
N^*_F(m) = N^{*}_F(m, m^{\frac{1}{d-a_F}}) + \tilde N^{*}_F(m, m^{\frac{1}{d-a_F}})
$$
and so 
\begin{equation} \label{eq69}
|V_Fm^{\frac{n}{d}} - N^*_F(m)| \leq |V_Fm^{\frac{n}{d}} - N^*_F(m, m^{\frac{1}{d-a_F}})| + \tilde N^{*}_F(m, m^{\frac{1}{d-a_F}}).
\end{equation}

We first estimate $|V_Fm^{\frac{n}{d}} - N^*_F(m, m^{\frac{1}{d-a_F}})|$, To this end we note that by Lemma \ref{lem8} with $B= m^{\frac{1}{d-a_F}}$
$$
|V_Fm^{\frac{n}{d}} - Vol(m, m^{\frac{1}{d-a_F}})| \ll_{n,d}  \mathcal{H}(F)^{c_F}m^{\frac{n-1}{d-a_F}}(1 + \log m)^{n-2}.
$$
It follows that
\begin{equation} \label{eq70}
|V_Fm^{\frac{n}{d}} - Vol^{'}(m, m^{\frac{1}{d-a_F}})| \ll_{n,d} \mathcal{H}(F)^{c_F}m^{\frac{n-1}{d-a_F}}(1 + \log m)^{n-2}.
\end{equation}
By Lemma \ref{lem9}, with $B=m^{\frac {1}{d-a_F}}$,
\begin{equation} \label{eq71}
| Vol^{'}(m, m^{\frac{1}{d-a_F}}) - N^{'}_F(m, m^{\frac{1}{d-a_F}})| \ll_{n,d} m^{\frac{n-1}{d-a_F}}.
\end{equation}
Denote by $N^{*'}_F(m, m^{\frac{1}{d-a_F}})$ the number of integer points $\textbf{a}$ with $1 \leq |F(\textbf{a})| \leq m$ and $max(|a_1|, ... ,|a_n|) \leq m^{\frac{1}{d-a_F}}$. The integer points $\textbf{a}$ for which $F(\textbf{a}) = 0$ lie in the subspaces $A_1, ... ,A_k$ with $k \leq d$ and each subspace contains at most $(1 + 2m^{\frac{1}{d-a_F}})^{n-1}$ points hence
\begin{equation} \label{eq72}
| N^{'}_F(m, m^{\frac{1}{d-a_F}}) - N^{*'}_F(m, m^{\frac{1}{d-a_F}})| \ll_{n,d} m^{\frac{n-1}{d-a_F}}
\end{equation}
Next we observe that
\begin{equation} \label{eq73}
| N^{*'}_F(m, m^{\frac{1}{d-a_F}}) - N^{*}_F(m, m^{\frac{1}{d-a_F}})| \leq  \tilde N^{*}_F(m, m^{\frac{1}{d-a_F}}).
\end{equation}
Thus, by \eqref{eq69}, \eqref{eq70}, \eqref{eq71}, \eqref{eq72} and \eqref{eq73},
\begin{equation} \label{eq74}
| V_Fm^{\frac{n}{d}} - N^{*}_F(m)| \ll_{n,d}  \mathcal{H}(F)^{c_F}m^{\frac{n-1}{d-a_F}}(1 + \log m)^{n-2} +  \tilde N^{*}_F(m, m^{\frac{1}{d-a_F}}).
\end{equation}

It remains to estimate $\tilde N^{*}_F(m, m^{\frac{1}{d-a_F}})$. By Lemma \ref{lem10} the integer points $\textbf{a}$ with $1 \leq |F(\textbf{a})| \leq m$ and $\left\Vert \textbf a \right\Vert \geq m^{1/(d-a_F)}$ are contained in a set of cardinality
\begin{equation} \label{eq75}
 \ll_{n,d}  \mathcal{H}(F)^{c_F}m^{\frac{n-1}{d-a_F}}(1 + \log m)^{n-2} 
\end{equation}
or lie in
\begin{equation} \label{eq76}
 \ll_{n,d}  (1 + \log m + \log \mathcal{H}(F))^{n-1} 
 \end{equation}
proper subspaces.

If $T$ is a proper rational subspace of $\mathbb{R}^n$ of dimension $k$ with $k < n$ which contains an integer point $\textbf{a}$ with $1 \leq |F(\textbf{a})| \leq m$ then, as we remarked in the proof of Theorem \ref{Theorem 1}, $F_{|T}$ is of essentially finite type and $F_{|T}$ may be viewed as a form in $k$ variables with $k \leq n-1$. Thus by Theorem \ref{Theorem 1} each proper subspace contains
\begin{equation} \label{eq77}
 \ll_{n,d}  m^{\frac{1}{d} + \frac{n-2}{d-d_F}} 
 \end{equation}
integer points  $\textbf{a}$ with $1 \leq |F(\textbf{a})| \leq m$ and $\left\Vert \textbf a \right\Vert \geq m^{1/(d-a_F)}$. Therefore, by \eqref{eq75}, \eqref{eq76} and \eqref{eq77},
\begin{equation} \label{eq78}
 \tilde N^{*}_F(m, m^{\frac{1}{d-a_F}})  \ll_{n,d} \mathcal{H}(F)^{c_F}m^{\frac{n-1}{d-a_F}}(1 + \log m)^{n-2}  + (1 + \log m + \log \mathcal{H}(F))^{n-1} 
m^{\frac{1}{d} + \frac{n-2}{d-d_F}}. 
 \end{equation}
Our result now follows from \eqref{eq74} and \eqref{eq78}.

\section{Acknowledgements}

 This research was supported in part by  grant A3528 from the Natural Sciences and Engineering Research Council of Canada.

\end{document}